\documentclass{aptpub}

\usepackage{color}
\usepackage[dvipsnames]{xcolor}
\usepackage{pgf,tikz}
\usepackage{graphicx}
\usepackage{dsfont}
\usepackage{enumerate}
\usepackage[normalem]{ulem}

\newcommand{\mylabel}[2]{#2\def\@currentlabel{#2}\label{#1}}

\authornames{J.~KREBS AND W.~POLONIK} 
\shorttitle{Asymptotic normality of persistent Betti numbers} 



\usepackage{mathtools}

\DeclarePairedDelimiter\ceil{ \lceil}{ \rceil}


\newcommand{\N}{\mathbb{N}}
\newcommand{\R}{\mathbb{R}}

\newcommand{\mX}{\mathbb{X}}
\newcommand{\mY}{\mathbb{Y}}
\newcommand{\Z}{\mathbb{Z}}

\newcommand{\p}{\mathbb{P}}

\newcommand{\cA}{\mathcal{A}}
\newcommand{\cB}{\mathcal{B}}
\newcommand{\cC}{\mathcal{C}}

\newcommand{\cF}{\mathcal{F}}
\newcommand{\cH}{\mathcal{H}}

\newcommand{\cJ}{\mathcal{J}}
\newcommand{\cK}{\mathcal{K}}
\newcommand{\cL}{\mathcal{L}}

\newcommand{\cN}{\mathcal{N}}

\newcommand{\cP}{\mathcal{P}}
\newcommand{\cR}{\mathcal{R}}

\newcommand{\cU}{\mathcal{U}}
\newcommand{\cV}{\mathcal{V}}
\newcommand{\cW}{\mathcal{W}}

\newcommand{\cX}{\mathcal{X}}

\newcommand{\fC}{\mathfrak{C}}
\newcommand{\fD}{\mathfrak{D}}

\newcommand{\fK}{\mathfrak{K}}

\newcommand{\fR}{\mathfrak{R}}
\newcommand{\fS}{\mathfrak{S}}

\newcommand{\fQ}{\mathfrak{Q}}
\newcommand{\fV}{\mathfrak{V}}
\newcommand{\E}[1]{\mathbb{E}\left [  #1 \right ]}

\newcommand{\V}[1]{\operatorname{Var}\left [  #1  \right ] }

\renewcommand{\epsilon}{\varepsilon}
\renewcommand{\phi}{\varphi}

\newcommand{\intd}[1]{\mathrm{d}#1}

\newcommand{\1}[1]{\,\mathds{1}\! \left\{ #1 \right\} }
\newcommand{\covar}[2]{\operatorname{Cov} \left(#1,#2		\right) }

\newcommand{\poi}{\operatorname{Poi}}
\newcommand{\diam}{\operatorname{diam}}

\newcommand{\ul}{\underline}
\newcommand{\ol}{\overline}
\newcommand{\wt}{\widetilde}

\newcommand{\cPc}{\cP^\circ}
\newcommand{\cPd}{\cP^\dagger}
\newcommand{\omegad}{\omega^\dagger}
\newcommand{\omegac}{\omega^\circ}
\newcommand{\diff}{\mathrm{d}}

\allowdisplaybreaks


\begin{document}

\title{On the asymptotic normality of persistent Betti numbers} 

\authorone[KU Eichstätt-Ingolstadt]{Johannes Krebs}
\authortwo[University of California at Davis]{Wolfgang Polonik}

\addressone{KU Eichstätt-Ingolstadt, Ostenstra\ss e 28, 85072 Eichst{\"a}tt, Germany} 
\emailone{johannes.krebs@ku.de} 

\addresstwo{Department of Statistics, University of California, Davis, CA, 95616, USA} 
\emailtwo{wpolonik@ucdavis.edu} 

\vspace{-3em}

\begin{abstract}
Persistent Betti numbers are a major tool in persistent homology, a subfield of topological data analysis. Many tools in persistent homology rely on the properties of persistent Betti numbers considered as a two-dimensional stochastic process $ (r,s) \mapsto n^{-1/2} (\beta^{r,s}_q ( \cK(n^{1/d} \cX_n))-\mathbb{E}[\beta^{r,s}_q ( \cK( n^{1/d} \cX_n))])$. So far, pointwise limit theorems have been established in different settings. In particular, the pointwise asymptotic normality of (persistent) Betti numbers has been established for stationary Poisson processes and binomial processes with constant intensity function in the so-called critical (or thermodynamic) regime, see Yogeshwaran et al.\ \cite{yogeshwaran2017random} and Hiraoka et al.\ \cite{hiraoka2018limit}. 

In this contribution, we derive a strong stabilization property (in the spirit of Penrose and Yukich \cite{penrose2001central}) of persistent Betti numbers and generalize the existing results on the asymptotic normality to the multivariate case and to a broader class of underlying Poisson and binomial processes. Most importantly, we show that the multivariate asymptotic normality holds for all pairs $(r,s)$, $0\le r\le s<\infty$, and that it is not affected by percolation effects in the underlying random geometric graph.
\end{abstract}

\keywords{Critical regime; Strong stabilization; Topological data analysis; Random geometric complexes; Weak convergence.}

\ams{60D05; 60F05}{60G55} 

\section{Introduction}
In this manuscript we address an important question in topological data analysis (TDA), namely, the study of the weak convergence of persistent Betti numbers
\begin{align}\label{E:GaussianLimit}
		\left( 	 n^{-1/2}\left( \beta^{r_i,s_i}_q(\cK(n^{1/d} \cX_n) ) - \E{ \beta^{r_i,s_i}_q(\cK(n^{1/d} \cX_n) ) } \right): 1\le i \le \ell \right),
\end{align}
where $0\le q \le d-1$ and $0\le r_i\le s_i <\infty$ for $1\le i \le \ell$ ($\ell\in\N$) and where $\cX_n$ is either an  $n$-binomial process with a bounded density $\kappa$ defined on the unit cube $[0,1]^d$ or the corresponding Poisson process with intensity function $n \kappa$ for $n\in\N$.

So far, there exist results on the pointwise asymptotic normality for Betti numbers (i.e., $\ell = 1$) in the case of a homogeneous Poisson process or a binomial process with a constant density, see Yogeshwaran et al.\ \cite{yogeshwaran2017random}. In the case of a homogenous Poisson process this result was extended to persistent Betti numbers by Hiraoka et al.\ \cite{hiraoka2018limit}.

Based on the pioneering central limit theorem of Penrose and Yukich \cite{penrose2001central} for stabilizing functionals on the homogeneous Poisson process, Trinh \cite{trinh2019central} extends the central limit theorem to strongly stabilizing functionals in case of an underlying inhomogeneous Poisson process. We will apply the abstract result of Trinh to persistent Betti functions (see below). For this we establish the strong stabilization property of the persistent Betti function, and this is one of our main contributions.

The theory on random geometric complexes is growing rapidly. For pioneering contributions see Kahle \cite{kahle2011random}, Yogeshwaran and Adler \cite{yogeshwaran2015topology} or Owada and Adler \cite{owada2017limit}. We refer to Bobrowski and Kahle \cite{bobrowski2014topology} for a survey.

Recent contributions in the context of Betti numbers are Kahle and Meckes \cite{kahle2010limit}, Owada \cite{owada2018limit}, Goel et al.\ \cite{goel2018asymptotic}, Divol and Polonik \cite{divol2018persistence} and Owada and Thomas \cite{owada2019limit}.

TDA is a comparably young field that has emerged from several contributions in algebraic topology and computational geometry. Milestone contributions which helped to popularize TDA in its early days are Edelsbrunner et al.\ \cite{edelsbrunner2000topological}, Zomorodian and Carlsson \cite{zomorodian2005computing} as well as Carlsson \cite{carlsson2009topology}. Given a point cloud sampled from a distribution with density $f$ on a $d$-dimensional manifold, TDA consists of various techniques that aim at understanding the topology of the manifold and of the density $f$. The various methods of TDA have been successfully implemented in applied sciences such as biology (\cite{yao2009topological}), materials sciences (\cite{lee2017quantifying}) or chemistry (\cite{nakamura2015persistent}). From the mathematical statistician's point of view, a particular interest deserves the application of TDA to time series, see, e.g., the pioneering works of Seversky et al.\ \cite{seversky2016time}, Umeda \cite{umeda2017time} and the contributions of Gidea et al.\ to the analysis of financial time series (\cite{gidea2018topological, gidea2017topological, gidea2019topological}).

Our contribution falls into the area of persistent homology which is one of the major tools in TDA. We can only give a short introduction to this topic here, a more detailed introduction offering insights to the basic concepts, ideas and applications of persistent homology can be found in Chazal and Michel \cite{chazal2021introduction}, Oudot \cite{oudot2015persistence} and Wasserman \cite{wasserman2018topological}.

Given a point cloud in $\R^d$ (a sample of a point process) one first builds simplicial complexes over this point cloud according to a rule that describes the neighborhood relation between points. The two most frequent simplicial complex models are the Vietoris-Rips and the {\v C}ech complex defined below. When considered as geometric structures, topological properties of simplicial complexes are characterized by the number of their $q$-dimensional holes, most notably connected components, loops and cavities (0, 1 and 2 dimensional features). These holes are theoretically defined with a tool from algebraic topology, the so-called homology. The $q$th homology of a simplicial complex is determined by a quotient space. Its dimension is the so-called $q$th Betti number. Intuitively, the $q$th Betti number counts the number of $q$-dimensional holes in the simplicial complex.

For a given simplicial complex model, a filtration is an increasing collection of simplicial complexes indexed by a one-dimensional parameter, the so-called filtration parameter, which can be understood as time. Given a filtration on a finite time interval, we can consider the evolution of the $q$th homology groups, i.e., the dynamical behavior of the Betti numbers. As the underlying simple point process (e.g., a Poisson process on a Euclidean space) is random, these Betti numbers are also random and we consider a stochastic process. 

From the applied point of view, the mere knowledge of the evolution of the Betti numbers is often not enough, especially when considering objects obtained from persistence diagrams, such as persistent landscapes. In this context the more general concept of {\em persistent} Betti numbers is the appropriate tool, and this is the object studied here. 

The remainder of this manuscript is organized as follows. In Section~\ref{Section_DefinitionsNotation} we introduce our notation. The main results are presented in Section~\ref{Section_MainResults}. We state the property of strong stabilization and present two central limit theorems for persistent Betti numbers. Section~\ref{Section_RelatedResults} offers a short review of important related results in the literature that are also used in our studies. The framework of stabilization is treated in detail in Section~\ref{Section_Stabilization} that also contains further results on the stabilizing properties of persistent Betti numbers.
The technical details are given in Section~\ref{Section_TechnicalResults} and in Appendix~\ref{AppendixTightnessOfStabilization}.

\section{Notation}\label{Section_DefinitionsNotation}
Given a finite subset $P$ of the Euclidean space $\R^d,$ the {\v C}ech filtration $\cC(P)=(\cC_r(P):r\ge 0)$ and the Vietoris-Rips filtration $\cR(P)=(\cR_r(P):r\ge 0)$ are defined by
 \begin{align*}
				\cC_r(P) &= \big\{  \sigma \subseteq P, \bigcap_{x\in \sigma} B(x,r)\neq \emptyset \big\}, \\
				\cR_r(P) &= \{ \sigma \subseteq P, \diam(\sigma)\le r \},
\end{align*}
respectively, where $B(x,r) = \{y\in\R^d: \|x-y\|\le r\}$, $\|\cdot\|$ is the Euclidean norm and $\diam$ is the diameter of a subset of $\R^d$. Each $\sigma \subseteq P$ of size $q+1$ is a q-simplex, and for each $r\ge 0$, the collections of simplices $\cC_r(P)$ and $\cR_r(P)$, respectively, form simplicial complexes. Throughout this article, $\cK_r(P)$ denotes either the {\v C}ech or the Vietoris-Rips complex built from $P$ for some $r\ge 0$. The {\v C}ech or the Vietoris-Rips filtration $\cK(P)$ is the nested sequence of complexes $\cK_r(P)$ as $r$ goes from 0 to $+\infty$.

Consider a filtration $\cK(P)$ and a time $r\ge 0$. The chain group, generated by the $q$-dimensional simplices at time $r$ is $C_q( \cK_r(P))$. Write $Z_q(\cK_r(P))$ for the $q$th cycle group of the simplicial complex $\cK_r(P)$ and $B_q(\cK_r(P))$ for the $q$th boundary group, and let $H_q(\cK_r(P))$ be the homology of the simplicial complex $\cK_r(P)$ w.r.t.\ to the base field $\mathbb{F}_2=\{0,1\}$. A $q$-simplex $\sigma$ is positive in the filtration $\cK(P)$ if, upon its filtration time $r(\sigma)$ (the time it enters the complex), its inclusion to the simplicial complex $\cK_{r(\sigma)-}$ creates a $q$-dimensional cycle. Here $\cK_{r(\sigma)-}$ is the simplicial complex $\cK_{r(\sigma)}$ without the complexes containing $\sigma$ (as a simplex or as a face). If the $q$-simplex $\sigma$ is not positive, then we call it negative.

Let $0\le q \le d-1$. Then the $(r,s)$-persistent Betti number of a simplicial complex $\cK(P)$ (see \cite{edelsbrunner2000topological}) is defined by
\begin{align*}
			\beta^{r,s}_q(\cK(P)) &= \dim \frac{Z_q (\cK_r(P))}{Z_q (\cK_r(P)) \cap B_q (\cK_s(P))} \\
			&= \dim Z_q (\cK_r(P)) - \dim Z_q (\cK_r(P)) \cap B_q (\cK_s(P)).
\end{align*}
The Betti number $\beta^r_q(\cK(P))$ is defined as $\beta^{r,r}_q(\cK(P))$, $r\ge 0$; in particular, $\beta^r_q(\cK(P)) = \dim Z_q (\cK_r(P)) - \dim B_q (\cK_r(P))$. 

The persistent Betti number $\beta^{r,s}_q(\cK(P))$ is closely related to the persistence diagram of the underlying point cloud $P$, see \cite{hiraoka2018limit} for further details.  It equals the number of $q$-dimensional topological features (points in the $q$th persistence diagram) born before time $r$ and still alive at time $s$ (see Figure~\ref{F:Persistence diagram}). Persistent Betti numbers are translation invariant, i.e., $\beta^{r,s}_q(\cK(P+v)) = \beta^{r,s}_q(\cK(P))$ for each $v\in\R^d$. The add one cost function 
$$
	\fD_0 \beta^{r,s}_q(\cK(P)) = \beta^{r,s}_q(\cK(P \cup \{0\} )) - \beta^{r,s}_q(\cK(P))
	$$ is an important tool in our analysis. 

\begin{figure}[ht]
\centering
\begin{tikzpicture}[scale=.9] 
	\fill[fill=lightgray] (2,3)--(2,4.5)--(0,4.5)--(0,3);                 
	\draw[->] (0,0)--(0,5) node[above] {\scriptsize{death}};
	\draw[->] (0,0)--(5,0) node[right] {\scriptsize{birth}};
	\draw (0,0)--(5,5);
	\draw [dotted] (4.5,0)--(4.5,4.5);
	\draw [dotted] (0,4.5)--(4.5,4.5);
	\draw [red, thick] (2,3)--(2,4.5);
	\draw [dashed, red, thick] (2,3) -- (0,3);
	\draw [dotted] (2,0)--(2,3);
	\draw (-.3,3) node {s};
	\draw (2,-.3) node {r};
	\draw (-.3,4.5) node {$\infty$};
	\filldraw (1,2) circle (1.5pt);
	\filldraw (1.3,4) circle (1.5pt);
	\filldraw (1,4.2) circle (1.5pt);
	\filldraw (.5,3) circle (1.5pt);
		\filldraw (.4,2.8) circle (1.5pt);
	\filldraw (2.3,3) circle (1.5pt);
	\filldraw (4,4.4) circle (1.5pt);
	\filldraw (3.5,3.8) circle (1.5pt);
		\filldraw (2,3.3) circle (1.5pt);
	\filldraw (1,1.8) circle (1.5pt);
\end{tikzpicture}
\caption{$\beta^{r,s}_q(\cK(P))$ equals the number of points in the gray-shaded rectangle; the point on the dashed red line is not counted whereas the point on the solid red line is.}
\label{F:Persistence diagram}
\end{figure}

We let $\cP$ and $\cP'$ be two independent and homogeneous Poisson processes on $\R^d$ with unit intensity, observed on increasing observation windows $B_n=[-2^{-1} n^{1/d},2^{-1} n^{1/d}]^d$. Given a function $w \ge 0$, we denote by $\cP(w)$ the (inhomogenous) Poisson process with intensity function $w$.

We also use the following notation: $\Delta=\{(r,s): 0\le r\le s<\infty\}$ denotes the domain of the persistent Betti function. $Q(x,r) = \{y\in\R^d: |y_i - x_i| \le r \text{ for } 1\le i \le d \}$ and $Q(x) = (-1/2,1/2]^d + x$ for $x\in\R^d$ and $r>0$. For $y,z\in\Z^d$, we write  $y \prec z$ if $y$ precedes $z$ in the lexicographic ordering on $\Z^d$ and write $y \preceq z$ if either $y\prec z$ or $y=z$.
If $f\colon\R\rightarrow\R$, write $\|f\|_{\infty}$ for the sup-norm of $f$. We let $\Rightarrow$ denote convergence in distribution of a sequence of random variables. Throughout the article, let $(\Omega,\cF,\p)$ be a sufficiently rich probability space, on which all random variables are defined.

\section{Main results}\label{Section_MainResults}

We present the first main result, discussed in detail later in Section~\ref{Section_Stabilization}.  
\begin{theorem}\label{T:StrongStabAddOne}
Let $\lambda>0$, $(r,s)\in\Delta$ and $q \in \{0,\ldots,d-1\}$. There is an $\cF$-measurable random variable $S^{(r,s)}_q \coloneqq S^{(r,s)}_q(\cP(\lambda)) $ which is  $a.s.$ finite such that for all finite sets $A \subseteq \R^d\setminus B(0, S^{(r,s)}_q)$, the add one cost function satisfies
\begin{align*}
		&\fD_0 \beta^{r,s}_q\Big( \cK\big( \big(\,\cP(\lambda)\cap B(0,S^{(r,s)}_q)\,\big) \cup A \big) \Big) 
		\equiv \fD_0 \beta^{r,s}_q\Big( \cK\big( \cP(\lambda)\cap B(0,S^{(r,s)}_q)\big)\Big)  \quad a.s.
\end{align*}
\end{theorem}
Thus, the persistent Betti function is strongly stabilizing on the homogeneous Poisson process $\cP(\lambda)$ in the spirit of Penrose and Yukich \cite{penrose2001central} for each intensity $\lambda\in\R_+$, each pair $(r,s)\in\Delta$ and each dimension $q$. The proof of Theorem~\ref{T:StrongStabAddOne} relies on an abstract stabilization result stated in Theorem~\ref{Thrm:StrongStabilization}; see Section~\ref{Section_Stabilization}. The proofs of both theorems are given in Subsection~\ref{Subsection_Stabilization}.

By the property of strong stabilization there are random variables $\Delta^{r,s}_0(\infty)$ taking values in $\Z$ and $N_0$ taking values in $\N$ such that, for all $n \ge N_0$,
\begin{align*}
			&\beta_q^{r,s} (\cK(\cP\cap B_n)) - \beta_q^{r,s}  (\cK(( [\cP\setminus Q(0)]\cup[\cP'\cap Q(0)])\cap B_n) ) 
			\equiv \Delta^{r,s}_0(\infty),
\end{align*}
see Lemma~\ref{L:asStabilizationPoisson}. Let $\cF_0$ be the $\sigma$-field generated by $\cP$ restricted to $\bigcup_{y\in\Z^d: y  \preceq 0} Q(y)$.
Define $\gamma( (u,v), (r,s)) = \E{ \E{\Delta^{u,v}_0(\infty)|\cF_0} \E{\Delta^{r,s}_0(\infty)|\cF_0} }$.
The asymptotic normality in the Poisson sampling scheme can directly be derived from the the strong stabilization stated in Theorem~\ref{T:StrongStabAddOne} and the abstract result of Trinh \cite{trinh2019central} via the Cram\'{e}r -Wold device:

\begin{theorem}\label{T:GaussianLimitPoisson}
Let $\cP_n=\cP(n\kappa)$ be a Poisson process with intensity $n\kappa$ on $[0,1]^d$, where $\kappa$ is a bounded and measurable density function. Let $X \sim \kappa$ and let $(r_i,s_i)\in\Delta$ for $1\le i\le \ell$ and $\ell\in\N$. Then 
\[
			\begin{pmatrix}
				&n^{-1/2}\left( \beta^{r_1,s_1}_q(\cK(n^{1/d} \cP_n) ) - \E{ \beta^{r_1,s_1}_q(\cK(n^{1/d} \cP_n) ) } \right) \\
				&\vdots\\
				&n^{-1/2}\left( \beta^{r_\ell,s_\ell}_q(\cK(n^{1/d} \cP_n) ) - \E{ \beta^{r_\ell,s_\ell}_q(\cK(n^{1/d} \cP_n) ) } \right) 
				\end{pmatrix}
				 \Rightarrow \Psi,
\]
where $\Psi\sim \cN(0,\Sigma)$ has a multivariate normal distribution with mean zero and covariance matrix $\Sigma\, \ge 0$ given by
\[
		\Sigma(i,j) = \E{ \gamma( \kappa(X)^{1/d} ( (r_i,s_i),(r_j,s_j)) ) } \quad (1\le i,j\le \ell).
\]
Furthermore,
$$
	\lim_{n\to\infty} n^{-1} \covar{ \beta^{r_i,s_i}_q(\cK(n^{1/d} \cP_n) ) }{ \beta^{r_j,s_j}_q(\cK(n^{1/d} \cP_n) ) } = \Sigma(i,j) \qquad (1\le i,j\le \ell).
$$
\end{theorem}
\noindent Moreover, define for $0\le r\le s< \infty$
\begin{align}\begin{split}\label{Def:AlphaRS}
			\alpha(r,s) \coloneqq \E{	\fD_0 \beta^{r,s}_q\Big(\cK\big(	\cP \cap B(0,S^{(r,s)}_q ) \big) \Big)		}, 
\end{split}\end{align}
where $S^{(r,s)}_q = S^{(r,s)}_q(\cP)$ is as in Theorem~\ref{T:StrongStabAddOne}. Then, for the binomial sampling scheme the result is as follows:

\begin{theorem}\label{T:GaussianLimitBinomial}
Let $\mX_n$ be an $n$-binomial process with density function $\kappa$ on $[0,1]^d$, which is bounded and measurable. Let $X \sim \kappa$ and let $(r_i,s_i)\in\Delta$ for $1\le i \le \ell$ and $\ell\in\N$.  Then 
\[
			\begin{pmatrix}
			&n^{-1/2}\left( \beta^{r_1,s_1}_q(\cK(n^{1/d} \mX_n) ) - \E{ \beta^{r_1,s_1}_q(\cK(n^{1/d} \mX_n) ) } \right) \\
			&\vdots\\
			&n^{-1/2}\left( \beta^{r_\ell,s_\ell}_q(\cK(n^{1/d} \mX_n) ) - \E{ \beta^{r_\ell,s_\ell}_q(\cK(n^{1/d} \mX_n) ) } \right) 
			\end{pmatrix}
			\Rightarrow \wt\Psi,
\]
where $\wt\Psi\sim \cN(0,\wt\Sigma)$ has a multivariate normal distribution with mean zero and covariance matrix $\wt\Sigma\,\ge 0$ given by
\begin{align*}
		\wt\Sigma(i,j) &= \E{ \gamma( \kappa(X)^{1/d} ( (r_i,s_i),(r_j,s_j)) ) } \\
		&\qquad - \E{ \alpha( \kappa(X)^{1/d} (r_i,s_i))} \, \E{\alpha( \kappa(X)^{1/d} (r_j,s_j))} \quad (1\le i,j\le \ell).
\end{align*}
Furthermore,
$$
	\lim_{n\to\infty} n^{-1} \covar{ \beta^{r_i,s_i}_q(\cK(n^{1/d} \mX_n) ) }{ \beta^{r_j,s_j}_q(\cK(n^{1/d} \mX_n) ) } = \wt\Sigma(i,j) \qquad (1\le i,j\le \ell).
$$
\end{theorem}
We conclude this section with some discussion about the results and the techniques used in the corresponding proofs that are given in Subsections~\ref{S:Poisson} and \ref{S:Binomial}. The univariate central limit theorems for Betti numbers ($r=s$) have already been formulated by Trinh \cite{trinh2019central} (see also Proposition~\ref{PropositionHiraokaYogeshwaran} in this manuscript) under the condition that the parameter $r$ is chosen such that no percolation occurs. Now, as we can rely on the strong stabilization property, we can omit this restriction.

For the derivation of the multivariate results in the Poisson sampling scheme, we can rely on the abstract result of Trinh \cite[Theorem 3.3]{trinh2019central} and derive the covariance structure with the help of results from Penrose and Yukich \cite{penrose2001central}. Multivariate central limit theorems in the spatial context are also studied by Penrose \cite{penrose2005multivariate}. The results in the binomial setting are established with Trinh \cite[Theorem 3.9]{trinh2019central}, which itself relies on a de-Poissonization argument.

Finally, we mention that it is currently unknown whether or not the limiting covariance matrices are strictly positive definite.

\section{Related results}\label{Section_RelatedResults}
Below we discuss some literature closely related to our study. The techniques employed to obtain these results are tools from geometric probability, which studies geometric quantities deduced from simple point processes. A classical result of Steele \cite{steele1988growth} shows the convergence of the total length of the minimum spanning tree built from an i.i.d.\ sample of $n$ points in the unit cube. There are several generalizations of this work - for notable contributions see McGivney and Yukich \cite{mcgivney1999asymptotics}, Yukich \cite{yukich2000asymptotics}, Penrose and Yukich \cite{penrose2003weak} and the monograph of Penrose \cite{penrose2003random}. 

A different type of contribution, equally important, is Penrose and Yukich \cite{penrose2001central} which considers asymptotic normality of functionals built on Poisson and binomial processes. For completeness, we mention that the study of Gaussian limits (as in \cite{penrose2001central} and \cite{penrose2003weak}) is not limited to the total mass functional. It can be extended to random point measures obtained from the points of a marked point process, see, e.g., \cite{baryshnikov2005gaussian}, \cite{penrose2007gaussian} and \cite{blaszczyszyn2019limit}.

Goel et al.\ \cite{goel2018asymptotic} prove a convergence result for the expectation of Betti numbers in the critical regime. Their result generalizes directly to persistent Betti numbers and we have the following well-known result.
\begin{proposition}\label{P:ConvergenceExpectation}
Let $0<r\le s < \infty$. Let $\cX_n$ be either a Poisson process with intensity $n\kappa$ on $[0,1]^d$ or an $n$-binomial process  with density $ \kappa$. Then
\begin{align*}
			\lim_{n\rightarrow \infty} n^{-1} \E{ \beta^{r,s}_q( \cK( n^{1/d} \cX_n) ) } = \E{ \hat{b}_q(r \kappa(X')^{1/d},s \kappa(X')^{1/d})},
\end{align*}
where $X'$ has density $\kappa$ and where $\hat{b}_q(r,s)$ is the limit of $n^{-1} \E{ \beta^{r,s}_q(\cK((n^{1/d} \mX^*_n)) }$ for a homogeneous Poisson process $\mX^*_n$ on $[0,1]^d$ with intensity $n$.
\end{proposition}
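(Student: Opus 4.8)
The plan is to establish the result by a spatial blocking argument combined with a scaling relation, reducing the inhomogeneous problem to the known homogeneous per-unit-volume limit $\hat b_q$. I would first treat the Poisson case in detail and afterwards transfer to the binomial case by de-Poissonization. The starting point is the elementary scaling identity $\beta^{r,s}_q(\cK(cP)) = \beta^{r/c,s/c}_q(\cK(P))$ for $c>0$, which holds because both the {\v C}ech and the Vietoris--Rips complex are determined by pairwise distances, so that dilation by $c$ rescales every filtration time by $c$. Combined with the distributional identity $\lambda^{1/d}\cP(\lambda)\stackrel{\mathrm d}{=}\cP(1)$, this shows that the rate-$\lambda$ per-unit-volume limit $\hat\beta_q(\lambda;r,s) := \lim_{V\to\infty} V^{-1}\E{\beta^{r,s}_q(\cK(\cP(\lambda)\cap W_V))}$, where $W_V$ is a window of volume $V$ (existence from the homogeneous theory of \cite{yogeshwaran2017random} and \cite{hiraoka2018limit}), satisfies $\hat\beta_q(\lambda;r,s) = \lambda\,\hat b_q(r\lambda^{1/d}, s\lambda^{1/d})$. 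Indeed, dilating $\cP(\lambda)\cap W_V$ by $\lambda^{1/d}$ turns it into a rate-$1$ process on a window of volume $\lambda V$ and replaces $(r,s)$ by $(r\lambda^{1/d}, s\lambda^{1/d})$.

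Next I would partition $[0,1]^d$ into $M^d$ congruent subcubes $Q_1,\dots,Q_{M^d}$ of side $\ell = 1/M$ and centres $x_i$. After rescaling by $n^{1/d}$, the window $n^{1/d}Q_i$ has volume $n\ell^d$ and side length $n^{1/d}\ell\to\infty$, and on it the process $n^{1/d}S_n$ behaves locally like a homogeneous rate-$\kappa(x_i)$ process, since the dilation turns the intensity $n\kappa$ into $n\kappa(\cdot)/n=\kappa(\cdot)$ per unit volume. For fixed $M$, applying the homogeneous limit on each rescaled subcube yields $n^{-1}\E{\beta^{r,s}_q(\cK(n^{1/d}S_n\cap n^{1/d}Q_i))}\to \ell^d\,\hat\beta_q(\kappa(x_i);r,s)$ as $n\to\infty$, so that summing gives the Riemann sum $\sum_i \ell^d\,\hat\beta_q(\kappa(x_i);r,s)$. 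Letting $M\to\infty$ and using the scaling relation together with the fact that $X'$ has density $\kappa$, this converges to $\int_{[0,1]^d}\kappa(x)\,\hat b_q(r\kappa(x)^{1/d},s\kappa(x)^{1/d})\intd{x} = \E{\hat b_q(r\kappa(X')^{1/d},s\kappa(X')^{1/d})}$. This identifies the claimed limit, so the remaining work is to justify the two approximations (replacing the complex by the disjoint union over subcubes, and replacing the inhomogeneous process by a locally homogeneous one).

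The main obstacle is that the persistent Betti number is \emph{not} additive over the partition, because cycles and boundaries may cross subcube interfaces. I would control this via the Lipschitz-type bound (as in \cite{hiraoka2018limit}) that, whenever two filtrations differ only by simplices incident to a set $A$, their persistent Betti numbers differ by at most the number of $q$- and $(q+1)$-simplices incident to $A$. Since every simplex present at time $s$ has diameter at most $\mu(s)$, passing from $\beta^{r,s}_q(\cK(n^{1/d}S_n))$ to $\sum_i \beta^{r,s}_q(\cK(n^{1/d}S_n\cap n^{1/d}Q_i))$ only affects simplices meeting the $\mu(s)$-neighbourhood of the interfaces. In the critical regime the expected number of such simplices is, up to a constant depending on $\|\kappa\|_\infty$ and $\mu(s)$, proportional to the total interface volume $M^d (n^{1/d}\ell)^{d-1}\mu(s) \asymp n^{(d-1)/d}M\mu(s)$; dividing by $n$ gives $O(n^{-1/d}M\mu(s))\to 0$ for fixed $M$, so the gluing error is negligible in the first limit.

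Finally I would replace the inhomogeneous process on each subcube by the homogeneous rate-$\kappa(x_i)$ process. As $\kappa$ is uniformly continuous on $[0,1]^d$, I couple $S_n\cap Q_i$ with a homogeneous Poisson process of intensity $n\kappa(x_i)$ on $Q_i$ so that they differ by a Poisson number of points of mean $n\int_{Q_i}|\kappa(x)-\kappa(x_i)|\intd{x}\le n\ell^d\,\omega(\sqrt d\,\ell)$, where $\omega$ is the modulus of continuity of $\kappa$. Since the expected add-one cost $\E{|\fD_0\beta^{r,s}_q|}$ of a single point is bounded in the critical regime (the same incidence bound together with Poisson moment estimates for the number of neighbours within distance $\mu(s)$), the expected discrepancy is $O(n\ell^d\omega(\sqrt d\,\ell))$ per subcube and $O(n\,\omega(\sqrt d\,\ell))$ in total, which vanishes after division by $n$ as $\ell\to 0$. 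The Poisson-to-binomial transfer then follows from the standard de-Poissonization coupling, under which the two processes differ by $O(\sqrt n)$ points in expectation, contributing $O(n^{-1/2})$ after normalisation; and convergence of the Riemann sum uses the continuity of $\lambda\mapsto\hat b_q(r\lambda^{1/d},s\lambda^{1/d})$ from the homogeneous theory \cite{yogeshwaran2017random}. Taking the limits in the order $n\to\infty$ then $M\to\infty$ completes the proof.
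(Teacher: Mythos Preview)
The paper does not contain a proof of this proposition: it is quoted in Section~\ref{Section_RelatedResults} as a result of \cite{goel2018asymptotic} and is not reproved here. So there is no ``paper's own proof'' to compare against.

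That said, your sketch is essentially correct and follows the same route as \cite{goel2018asymptotic}: the scaling identity $\hat\beta_q(\lambda;r,s)=\lambda\,\hat b_q(r\lambda^{1/d},s\lambda^{1/d})$, a spatial blocking of $[0,1]^d$ into mesoscopic subcubes, the geometric Lemma~\ref{L:GeometricLemma} to control the $O(n^{(d-1)/d}M)$ simplices crossing subcube interfaces, a coupling of the inhomogeneous process on each block with a homogeneous Poisson process of intensity $\kappa(x_i)$ (the error being governed by the modulus of continuity $\omega$ of $\kappa$), and finally de-Poissonization. This is exactly the template that the present paper itself adopts in Propositions~\ref{P:BlockedPoissonProcess} and~\ref{P:EpsilonApproximationPoisson} for the second-moment analysis. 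Two small points worth tightening: the convergence of your Riemann sum to $\int_{[0,1]^d}\kappa(x)\hat b_q(r\kappa(x)^{1/d},s\kappa(x)^{1/d})\intd{x}$ requires continuity of $(r,s)\mapsto\hat b_q(r,s)$, which is proved in \cite{goel2018asymptotic} rather than in \cite{yogeshwaran2017random}; and the boundary-simplex count should be stated as an expectation bound via the Mecke formula (each boundary point generates $O(1)$ expected $q$- and $(q+1)$-simplices in the critical regime), not merely as a point count.
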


So far, normality results for (persistent) Betti numbers exist only in a pointwise sense and are rather direct consequences of Theorem 2.1 and 3.1 given in \cite{penrose2001central}. We quote them here in a way which makes them more in line with our framework. For this we need the notion of the interval of co-existence $I_d(\cP)$ of a Poisson process $\cP$ with unit intensity on $\R^d$. They are defined by the critical radii for percolation of the occupied and the vacant component, respectively, which are defined as:
\[
	r_c(\cP) := \inf\{ r: \p(\cC_r(\cP) \text{ percolates})>0 \} 
\]
and
\[
	r^*_c(\cP) := \sup\{ r: \p( \R^d\setminus \cC_r(\cP) \text{ percolates})>0 \} .
\] 
Both probabilities inside the infimum and supremum are either 0 or 1 by Kolmogorov's 0-1-law. $r_c(\cP)$ is called the \textit{critical radius for percolation of the occupied component} and $r_c^*$ is called the \textit{critical radius for percolation of the vacant component}. The \textit{interval of co-existence} for which unbounded components of both the (Boolean) model $\cC_r(\cP)$ and its complement co-exist, is defined as follows:
\[
	I_d(\cP) := \begin{cases}
	(r_c, r^*_c] & \text{ if } \p( \cC_{r_c}(\cP) \text{ percolates}) = 0, \\
	[r_c,r^*_c] & otherwise.
	\end{cases}
\]
We know that in two dimensions, $I_2(\cP) = \emptyset$ from \cite[Theorem 4.4 and 4.5]{meester1996continuum}. Moreover, from \cite[Theorem 1]{sarkar1997co}, we know that $I_d(\cP)\neq\emptyset$ for each $d\ge 3$. Then, we have the following results.

\begin{proposition}[Pointwise normality of (persistent) Betti numbers]\label{PropositionHiraokaYogeshwaran}
\begin{itemize}
		\item [\mylabel{P:Hiraoka}{(i)}] Hiraoka et al.\ \cite[Theorem 5.2]{hiraoka2018limit}. Let $\cP|_{[0,n^{1/d}]^d}$ be the restriction of $\cP$ to $[0,n^{1/d}]^d$, and let $0\le r \le s<\infty$. Then there is a $\sigma^2(r,s)\in\R_+$ such that
		\[
					n^{-1/2} (\beta^{r,s}_q (\cK(\cP|_{[0,n^{1/d}]^d})) - \E{\beta^{r,s}_q (\cK(\cP|_{[0,n^{1/d}]^d})) } ) \Rightarrow \Phi_1,
		\]
		where $\Phi_1$ has a normal distribution with mean zero and variance $\sigma^2(r,s)\, \ge 0$.
		\item [(ii)] Yogeshwaran et al.\ \cite[Theorem 4.7]{yogeshwaran2017random}. Let $\cK$ be the {\v C}ech filtration and let $0\le r < \infty$ be such that $r \notin I_d(\cP)$. For each $n\in\N$, let $\mX_n$ be an $n$-binomial process with a uniform density on $[0,1]^d$. Then there is a $0<\tau^2(r) \le \sigma^2(r,r)$ (with $\sigma^2$ from (i)), such that
				\[
					n^{-1/2} \left(\beta^{r}_q (\cK( n^{1/d} \mX_n)) - \E{\beta^{r}_q (\cK(n^{1/d} \mX_n)) } \right) \Rightarrow \Phi_2,
		\]
		where $\Phi_2$ has a normal distribution with mean zero and variance $\tau^2(r)\,> 0$.
		\item [(iii)] Trinh \cite[Theorem 4.1]{trinh2019central}. Let $\kappa$ be a bounded density function with compact support, let $\cK$ be the {\v C}ech filtration. Let $0\le q\le d-1$. Let $r\in (0, (\sup \kappa)^{-1/d} \ r_c)$. Then
		$$
			n^{-1/2}\left( \beta^{r}_q(\cK(n^{1/d} \cP(n\kappa)) ) - \E{ \beta^{r}_q(\cK(n^{1/d} \cP(n\kappa) )) } \right) \Rightarrow \Phi_3, 
		$$
		where $\Phi_3$ has a normal distribution with mean zero and variance $\wt\sigma^2 > 0$, where \\ $\wt\sigma^2 = \int \sigma^2(\kappa(x)^{1/d}(r,r)) \kappa(x) \intd{x}$ with $\sigma^2$ from (i). A similar statement is true for the binomial process.
\end{itemize}
\end{proposition}
First, we remark that the above statements in their original versions are also valid for more general domains $\tilde B_n\subseteq\R^d$ which are not necessarily rectangular domains.
Furthermore, we remark that Hiraoka et al.\ \cite{hiraoka2018limit} prove their theorem for a general class of filtrations which contains, among others, the {\v C}ech and the Vietoris-Rips filtration. Moreover, Theorem 4.7 of Yogeshwaran et al.\ \cite{yogeshwaran2017random} also contains a version of (ii) for Betti numbers of the homogeneous Poisson process, which, in the above list, is contained in result (i).  The result of Trinh \cite{trinh2019central} is already for Betti numbers from a general density function $\kappa$ but the parameter choice for $r$ depends on $r_c$. Moreover, Trinh \cite{trinh2019central} points out that in the case $d=2$ there are no restrictions on the choice of $r$ as $I_2(\cP)$ is empty; this can be shown with a duality property. Finally, regarding (iii), Yogeshwaran et al. \cite{yogeshwaran2017random} remark that the condition $r\notin I_d(\cP)$ is likely to be superfluous and, as already mentioned, we show that, indeed, the condition can be removed.

\section{Background for the strong stabilization}\label{Section_Stabilization}
In our analysis of multivariate asymptotic normality of persistent Betti numbers, stabilization properties are crucial. Kesten and Lee \cite{kesten1996central} introduced stabilization to prove asymptotic normality for the weight of the Euclidean minimal spanning tree. The concept was extended by Penrose and Yukich \cite{penrose2001central, penrose2003weak} to treat general functionals defined on Poisson and binomial point processes and goes as follows. Consider a functional $H$, which is defined on finite subsets of $\R^d$, and define its add one cost function as
\[
	\fD_0 H(\cH) \coloneqq H(\cH \cup \{0\}) - H(\cH)
	\]
for $\cH \subseteq\R^d$ finite. The functional $H$ is strongly stabilizing on the homogeneous Poisson process with intensity $\lambda\in (0,\infty)$ on $\R^d$, denoted by $\cP(\lambda)$, if there exist $a.s.$ finite random variables $S$ and $\fD_{\infty}H$ such that for all finite $A\subseteq \R^d\setminus B(0,S)$,
\[
	\fD_0 H( (\cP(\lambda)\cap B(0,S) ) \cup A ) = \fD_{\infty} H \quad a.s.
	\]
Recall that, for $n\in\N$, $B_n = [-n^{1/d}/2, n^{1/d}/2]^d$ denote observation windows, and let $\cA$ be the collection $\{B_n+x: x\in\R^d, n\in\N \}$. The functional $H$ is weakly stabilizing on $\cA$ (for $\cP(\lambda)$) if there is an $a.s.$ finite random variable $\fD'_{\infty} H$ such that, for any such sequence $(A_n: n\in\N)$ from the collection $\cA$ with $\lim_{n\to \infty}A_n = \R^d$,
\[
	\fD_0 H( \cP(\lambda) \cap A_n ) \to \fD'_{\infty} H \quad \text{a.s.}\; \text{as $n \to \infty$.}
	\]
Recall that the set-theoretic limit $\lim_{n\to \infty}A_n = \R^d$ is equivalent to $\lim_{n \to \infty}{\bf 1}_{A_n}(x) =1 $ for all $x \in \R^d$. The stabilization of a functional defined on subsets of a point process roughly means that a local change in the point process (e.g., adding or subtracting finitely many points) affects the value of the functional only locally. This latter phenomenon can be described with different notions. We consider two radii of stabilization for the persistent Betti function $\beta^{r.s}_q$. Their functionality is related to the classical weak and the strong stabilization property given above. Properties of these radii are addressed below in detail.

Consider a point process $P$ on $\R^d$ without accumulation points and let $Q$ be finite with circumcenter $z_Q\in\R^d$ and circumradius $L_Q$. So, $Q \subseteq B(z_Q,L_Q)$ for $L_Q\ge 0$ minimal. For short, write $\cK_{r,a} =  \cK_r( P \cap B(z_Q,a) )$ and $\cK'_{r,a}= \cK_r( (P\cup Q) \cap B(z_Q,a) )$ for $a,r \ge 0$. In the following, the reference case is that $Q\subset Q(0)$, so that $z_Q\in Q(0)$ and $L_Q \le \sqrt{d}/2$. (Recall that $Q(0)$ is defined in Section~\ref{Section_DefinitionsNotation}.)\\[5pt]
\noindent\textbf{Radius of weak stabilization:} Define the radius of weak stabilization of $(r,s)$ by
\begin{align}
		\rho^{(r,s)}_q (P,Q) &\coloneqq \inf\{	R>0: \dim Z_q(\cK'_{r,a}) - \dim Z_q(\cK_{r,a}) = \text{const. } \forall a\ge R \text{ and } \nonumber \\
		&\quad \dim Z_q(\cK'_{r,a})\cap B_q(\cK'_{s,a}) - \dim Z_q(\cK_{r,a})\cap B_q(\cK_{s,a}) = \text{const. }\; 
		\forall a\ge R\} \nonumber \\
		\rho^{(r,s)}(P,Q) &\coloneqq \max_{0\le q\le d-1} \rho_q^{(r,s)}(P,Q). \label{Def:WeakStabilizationRadius}
\end{align} 
One can use similar ideas as in the proof of Lemma 5.3 in Hiraoka et al.\ \cite{hiraoka2018limit} to show that if $P$ has $a.s.$ no accumulation points and if $Q$ is finite, then $\rho^{(r,s)}(P,Q)$ is $a.s.$ finite; we do this in Lemma~\ref{L:MeaningfulRho} in the Appendix. A similar result was also obtained by Hiraoka et al.\ \cite{hiraoka2018limit} for the add one cost function of persistent Betti numbers. Our definition of the radius of weak stabilization implies that for all $0\le r \le s$ and for all $ q\in \{0,\ldots,d-1\},$
\[
			\beta^{r,s}_q\Big( \cK\big( (P\cup Q)\cap B(z_Q,R) \big) \Big) - \beta^{r,s}_q\Big( \cK\big( P\cap B(z_Q,R) \big) \Big) = \text{const.}
\]
as a function in $R$, for  $R\ge \rho_q^{(r,s)}(P,Q).$\\ 

\noindent\textbf{Radius of strong stabilization:} Let $r>0$ be an arbitrary but fixed filtration parameter. Let $\mu(r)$ be an upper bound on the diameter of simplices in the filtration at time $r$. For the Vietoris-Rips filtration, $\mu(r)$ equals $r$. For the {\v C}ech filtration $\mu(r)=2r$ is a sharp bound. We choose $a\ge a^*(r) = L_Q + \mu(r)$ sufficiently large such that all simplices containing at least one point of $Q$ have a filtration time of at most $a^*(r)$; recall that $L_Q$ denotes the circumradius of $Q$.

Given $P$ and $Q$, let $\sigma^r_{q,i}$, $i=1,\ldots,m_q$, be the $q$-simplices in $\cK'_{r,a}\setminus \cK_{r,a}$ contained in the ball $B(z_Q,a)$ that are created until filtration time $r$ due to the addition of the points in $Q$ to the point process $P$. W.l.o.g., the simplices are already ordered according to their filtration time; simplices with the same filtration time are ordered at random.

We call the number $R$ that limits the knowledge of a point process $P'$ to the ball $B(z,R)$, the information horizon (w.r.t.\ $z$), i.e., we only observe the process $P'\cap B(z,R) = P'|_{B(z,R)}$ and the corresponding simplicial complexes restricted to $P'|_{B(z,R)}$, i.e., the complexes $\cK_r( P'|_{B(z,R)} )$, $r\ge 0$.

Let $\partial_q$ denote the $q$-th boundary map. For $i=1,\ldots,m_q$, define the following quantities depending on the filtration parameter $r\ge 0:$
\[
			C^{\,r}_{q,i}(a) = C_q ( \cK_{r,a})  \oplus \langle \sigma^r_{q,1},\ldots,\sigma^r_{q,i} \rangle \quad \text{ and } \quad Z^r_{q,i}(a) = \operatorname{ker}(\partial_q\colon C^{\,r}_{q,i}(a)  \to B^r_{q-1,i}(a) ),
\]
where $B^r_{q-1,i}(a) = \partial_q(C^{\,r}_{q,i}(a) ) \subseteq C^{\,r}_{q-1,i}(a)$ is the image of $\partial_q$. First, we define
\begin{align}
		\wt\rho_q^{\,r}(P,Q) \coloneqq \inf\Big\{&	R \ge a^*(r) \ \Big| \ \text{for each } \sigma^r_{q,i}, i \in \{1,\ldots,m_q \}: \nonumber\\
					& \text{ either }  \Big[ \exists c\in Z^r_{q,i}(R) : \sigma^r_{q,i}\ \text{ is contained in } c \Big] \nonumber \\		
		&  \text{ or }  \Big[ \text{ conditional on  $(P\cup Q) |_{B(z_Q,R)}$ } \nonumber \\
		 &  \big[ \forall a\ge R, \forall c\in Z^r_{q,i}(a): \sigma^r_{q,i} \text{ is not contained in $c$ } \big] \text{ is true } \Big] \Big \}. \label{Def:StrongStabilizationRadius1}
\end{align}
This definition means the following: Consider an information horizon $R  > \wt\rho_q^{\,r}(P,Q)$, i.e., we observe all points from the point process $(P\cup Q)\cap B(z_Q,R)$. Then if we include the $q$-simplex $\sigma^r_{q,i}$ in the simplicial complex, we already have the information that either $\sigma^r_{q,i}$ creates a new $q$-cycle or that it does not, even when additionally including points from an infinite information horizon. In other words, given $(P\cup Q)\cap B(z_Q,R)$, the event which simplices $\sigma^r_{q,i}$ are ultimately positive is decidable (i.e.\ computable or recursive). By ``a simplex being ultimately positive'' we mean that if the information horizon is large enough, then we see that the simplex is part of a cycle. Similarly, a simplex staying negative means that it never becomes part of a cycle, even if the information horizon is infinite.

Thus, the event of which simplices $\sigma^r_{q,i}$ are ultimately positive being decidable means that having observed $P \cup Q$ up to the information horizon $R$, i.e., $(P \cup Q)\cap B(z_Q,R)$, where $R$ is `large enough' ($ R > \wt\rho_q^{\,r}(P,Q)$), we know that each potential cycle in $C_q( \cK_r( \cP(\lambda)|_{B(z_Q,R)} ) ) \oplus \langle \sigma^r_{q,1},\ldots,\sigma^r_{q,i} \rangle$ containing $\sigma^r_{q,i}$ has already terminated, meaning that $\sigma^r_{q,i}$ is positive, or that $\sigma^r_{q,i}$ will stay negative.

In order to state, whether the persistent Betti number for a given pair $(r,s)$ and a given dimension $q$ has stabilized, we need to know whether a new cycle in $Z_q (\cK'_{r,\wt\rho^{\,r}_q})$ is also a $q$-dimensional feature, i.e., is not eventually a boundary in $B_q (\cK'_{s,a})\cap Z_q (\cK'_{r,a})$ for some $a>\wt\rho^{\,r}_q$.

For this purpose, apart from the additional $q$-simplices $\sigma^r_{q,1},\ldots,\sigma^r_{q,m_q}$, we write $\wt\sigma^s_{q+1,1},\ldots,\wt\sigma^s_{q+1,\wt m_{q+1}}$ for the additional $(q+1)$-simplices in $\cK'_{s,a}\setminus \cK_{s,a}$ for $a \ge a^*(s)$. 

We can repeat the considerations from above for the $(q+1)$-dimensional simplices and a filtration parameter equal to $s$. Then by definition, after time $\wt\rho^{\,s}_{q+1}(P,Q)$, we know for each $\wt\sigma^s_{q+1,i}$ whether it is positive or whether it is negative for all $a>\wt\rho^{\,s}_{q+1}(P,Q)$. 

Consequently, the radius of strong stabilization for the pair $(r,s)\in\Delta$ is as follows.
\begin{align}
		S_q^{(r,s)}(P,Q) = \max\{ \wt\rho^{\,r}_q(P,Q), \wt\rho^{\,s}_{q+1}(P,Q) \}.\label{Def:StrongStabilizationRadius2}
\end{align}
At this stage there is a major difference between the \v Cech and the Vietoris-Rips complex if $q=d-1$. For the \v Cech filtration, there are no $q$-dimensional cycles in $d$-dimensional Euclidean space for $q\ge d$. Thus, for the \v Cech filtration $S^{(r,s)}_{d-1}(P,Q) = \max\{\wt\rho^{\,r}_{d-1}(P,Q), a^*(s)\}$. For the Vietoris-Rips filtration, however, there can be $q$-dimensional cycles for every possible dimension $q$, we refer to Bobrowski and Kahle \cite{bobrowski2014topology}. In particular, $S^{(r,s)}_{d-1}(P,Q) >  \max\{\wt\rho^{\,r}_{d-1}(P,Q), a^*(s)\}$ can occur.

In the following, if $Q=\{0\}$, we simply write $\rho^{(r,s)}(P)$, $\wt\rho^{\,r}_q(P)$ or $S^{(r,s)}_q(P)$ for convenience. Next, we show in Theorem~\ref{Thrm:StrongStabilization} that the radius $\wt\rho_q^{\,r}(P,Q)$ is $a.s.$ finite for each $q\ge 0$ and $r\in\R_+$ if $P$ equals a homogeneous Poisson process modulo a finite set of points and $Q\subseteq\R^d$ is finite. In particular, this implies the strong stabilization property of the persistent Betti number $\beta^{r,s}_q$ in the sense, that $S^{(r,s)}_q$ from \eqref{Def:StrongStabilizationRadius2} is finite which in turn leads to Theorem~\ref{T:StrongStabAddOne}.

\begin{theorem}\label{Thrm:StrongStabilization}
For a Poisson process with constant intensity $\lambda\in\R_+$ and two finite (disjoint) sets $Q_1,Q_2\subseteq Q(0)$, the radius $	\wt\rho_q^{\,r}(\cP(\lambda)\cup Q_1,Q_2)$ is $a.s.$ finite for each $q$ and for each $r>0$. In particular, the radius of strong stabilization $S_q^{(r,s)}$ is finite for each $q$ and each $(r,s) \in \Delta$.
\end{theorem}

We apply arguments from continuum percolation theory for the proof of this theorem. These arguments are also used to prove uniqueness of the occupied and vacant component in the Boolean model, see e.g., Aizenman et al.\ \cite{aizenman1987uniquenessBook, aizenman1987uniqueness}, Burton and Keane \cite{burton1989density}, as well as the monograph of Meester and Roy \cite{meester1996continuum}.

Furthermore, we have the following relation between the two radii.
\begin{lemma}\label{L:StrongAndWeakStabilization}
$\rho^{(r,s)}_q(P,Q) \le  S_q^{(r,s)}(P,Q)$ for each pair $(r,s)\in \Delta$, $q\in\{0,\ldots,d-1\}$. 
\end{lemma}
Thus, weak stabilization measured in terms on $\rho^{(r,s)}_q$ is always implied by strong stabilization measured in terms of $S_q^{(r,s)}$. The proof of this lemma is given in Subsection~\ref{Subsection_Stabilization}.

Moreover, our results are not limited to this \textit{static} case, where we only consider one Poisson process and the persistent Betti number for one pair $(r,s)$: We show in Theorem~\ref{Thrm:StrongStabilizationAppl} that Borel probability measures induced by the radius of strong (and of weak) stabilization are tight over a variety of parameter ranges.

The theorem is divided into three parts. Part (1) considers uniform stabilization over a variety of homogeneous Poisson processes. These stabilization properties then enable us to derive the results in parts (2) and (3), where we consider stabilization for the binomial and the Poisson sampling schemes, respectively. The proof of Theorem~\ref{Thrm:StrongStabilizationAppl} is deferred to the Appendix~\ref{AppendixTightnessOfStabilization}.

\begin{theorem}[Uniform stabilization]\label{Thrm:StrongStabilizationAppl}
For $m \in \mathbb N$, let $\fQ_m = \{ \{y_1,\ldots, y_k\}: y_i\in Q(0), i=1,\ldots,k, k\le m\}$ be the class of sets with at most $m$ points in $Q(0)$. Let $+\infty> \ol{r}\ge \ul{r}>0$ and $m\in\N$ be arbitrary but fixed. Then we have the following:

\begin{itemize}\setlength\itemsep{0em}

\item [(1)] \textbf{Stabilization for the homogeneous Poisson case:} The laws of 
\[
\{ \wt\rho_q^{\,r} (\cP(\lambda)\cup Q_1,Q_2)): \ul{r}\le r \le \ol{r},\lambda\in \R_+, Q_1,Q_2\in \fQ_{m}, q=0,\ldots,d-1 \}
\]
are tight for each $m\in\N$.

\item [(2)]  \textbf{Stabilization in the Poisson sampling scheme:} Let $\nu$ be a probability density on $[0,1]^d$. For $n \in \N$ and $L>0$, set $B''_{n,L} = \{z\in\R^d: B(z,L) \subseteq [0,n^{1/d}]^d \}$. Consider a specific continuous density $\kappa$ on $[0,1]^d$.

Let $\epsilon>0$. Then there are $b>0$, $n_0\in\N$ and $L\in\R_+$ such that, uniformly in $q=0,\ldots,d-1$ and $r\in[\ul{r},\ol{r}],$
\[
			\sup_{n\ge n_0} \quad  \sup_{z \in B''_{n,L}} \quad  \sup_{Q_1,Q_2 \in z + \fQ_{m} } \quad \p( \wt\rho_q^{\,r} (n^{1/d} \cP(n \nu) \cup Q_1, Q_2 ) \ge L ) \le \epsilon
\]
for all densities $\nu$ on $[0,1]^d$ satisfying $ \| \nu - \kappa\|_\infty \le b$.

For each $n\in\N$, let $\cV_n, \cW_n$ be Poisson processes on $[0,n^{1/d}]^d$ that are independent of $n^{1/d}\cP(n\nu)$, and whose intensity functions on $\R^d$ are uniformly bounded in $n$. Then for each $\epsilon>0$, there are $b>0$, $n_0\in\N$ and $L>0$ such that, uniformly in $q=0,\ldots,d-1$ and $r\in[\ul{r},\ol{r}],$
\[
	  \sup_{n\ge n_0} \quad  \sup_{z \in B''_{n,L}} \quad  \p( \wt\rho_q^{\,r}(n^{1/d} \cP(n \nu) \cup ( \cV_n  \cap Q(z) ), \cW_n  \cap Q(z) ) \ge L ) \le \epsilon
\]
for all densities $\nu$ on $[0,1]^d$ satisfying $\| \nu - \kappa \|_{\infty} \le b$.

\item [(3)] \textbf{Stabilization in the binomial sampling scheme:}  Let $\mX_n$ be an $n$-binomial process on $[0,1]^d$ obtained from an i.i.d.\ sequence $(X_k:k\in\N)$ with common density $\kappa$. Let $X'$ be a random variable  independent of $(X_k:k\in\N)$, and with continuous density $\kappa$ on $[0,1]^d$. Write $Q_{m,n}$ for the point process $n^{1/d}(\mX_m - X')$ for $m\in J_n=[n-h(n),n+h(n)]$, where the function $h$ satisfies $h(n)\rightarrow\infty$ and $h(n)/n\rightarrow 0$ as $n\rightarrow\infty$. Then the family $\{ \wt\rho_q^{\,r}(Q_{m,n},\{0\}): n\in\N,m\in J_n, \ul{r}\le r\le \ol{r}, q=0,\ldots,d-1\}$ is tight.
\end{itemize}
Furthermore, with $\wt\Delta = \{(r,s)\in\Delta: w_1\le r\le s\le w_2\}$, where $w_2\ge w_1 >0$ are arbitrary, all these results remain valid if $\wt\rho_q^{\,r}$, $\ul r \le r\le \ol{r}$ is replaced by $\rho^{(r,s)}_q, (r,s) \in \wt\Delta$.
\end{theorem}

\section{Technical results}\label{Section_TechnicalResults}
This section consists of three parts. We derive the stabilization results in the first part. Then we prove the asymptotic normality of the finite-dimensional distributions in case of underlying Poisson processes in the second part. In the third part we prove the same limit result in case of an underlying sequence of binomial processes.

The next result is crucial for the upcoming proofs. It is a direct consequence of Lemma 2.11 in Hiraoka et al. \cite{hiraoka2018limit}. This so-called geometric lemma, enables us to obtain upper bounds on moments.  
\begin{lemma}[Corollary of \cite{hiraoka2018limit} Lemma 2.11] \label{L:GeometricLemma}
Let $\mX \subseteq \mY$ be two finite point sets of $\R^d$. Then
\[
		\left|\beta^{r,s}_q (\cK(\mY)) - \beta^{r,s}_q (\cK(\mX))	\right| \le \sum_{j=q}^{q+1}  |\cK_j(\mY,s) \setminus \cK_j(\mX,s)|.
\]
\end{lemma}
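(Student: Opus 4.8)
The plan is to reduce the statement to a single-simplex-insertion estimate and then sum. Writing $\beta^{r,s}_q(\cK(P)) = \dim Z_q(\cK_r(P)) - \dim\big(Z_q(\cK_r(P)) \cap B_q(\cK_s(P))\big)$, I would first record the elementary but decisive observation that $Z_q(\cK_r(P)) = \operatorname{ker}(\partial_q\colon C_q(\cK_r(P)) \to C_{q-1}(\cK_r(P)))$ depends only on the $q$-simplices present at time $r$, whereas $B_q(\cK_s(P)) = \operatorname{im}(\partial_{q+1})$ depends only on the $(q+1)$-simplices present at time $s$. Consequently, inserting a simplex of dimension $p\notin\{q,q+1\}$ leaves both subspaces, and hence $\beta^{r,s}_q$, unchanged; this is exactly why only the indices $j=q$ and $j=q+1$ appear on the right-hand side.

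Next I would pass from $\cK(\mX)$ to $\cK(\mY)$ by inserting the simplices of $\cK_s(\mY)\setminus\cK_s(\mX)$ one at a time. Since $\mX\subset\mY$, every simplex of $\cK(\mX)$ survives in $\cK(\mY)$, so this is a genuine sequence of insertions. I would order the new simplices by a linear extension of the face poset (faces before cofaces); because the filtration time of a face never exceeds that of its coface, after inserting any initial segment $S$ the pair $\big(\cK_r(\mX)\cup\{\sigma\in S: r(\sigma)\le r\},\, \cK_s(\mX)\cup S\big)$ is again a nested pair of genuine simplicial complexes, so $\beta^{r,s}_q$ is well defined at every intermediate stage.

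The heart of the argument is the per-step bound: a single insertion changes $\beta^{r,s}_q$ by at most $1$ in absolute value, and the change is zero unless the inserted simplex is a $q$-simplex with filtration time $\le r$ or a $(q+1)$-simplex with filtration time $\le s$. For a $q$-simplex entering at time $\le r$, rank--nullity shows $\dim Z_q(\cK_r)$ increases by $\delta_1\in\{0,1\}$ while $B_q(\cK_s)$ is untouched; for a $(q+1)$-simplex entering at time $\le s$, $\dim B_q(\cK_s)$ increases by $\epsilon_1\in\{0,1\}$ while $Z_q(\cK_r)$ is untouched. In both cases I would invoke the linear-algebra fact that if a subspace $V$ grows to $V'$ with $\dim V'/V = 1$, then $\dim(V'\cap U) - \dim(V\cap U)\in\{0,1\}$ for any fixed $U$, since $(V'\cap U)/(V\cap U)$ embeds into $V'/V$. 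This pins the change of the intersection term to $\{0,1\}$ and hence gives $|\Delta\beta^{r,s}_q|\le 1$. A simplex whose filtration time lies in $(r,s]$, or whose dimension is not $q$ or $q+1$, contributes $0$.

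Summing over all insertions, and noting that the $q$- and $(q+1)$-simplices capable of producing a nonzero change are all counted (with room to spare) by $K_q(\mY,s)\setminus K_q(\mX,s)$ and $K_{q+1}(\mY,s)\setminus K_{q+1}(\mX,s)$ respectively, yields the claimed inequality. The main obstacle is the per-step estimate: keeping the bookkeeping of the two filtration parameters $r\le s$ straight so that the roles of $Z_q(\cK_r)$ and $B_q(\cK_s)$ are not conflated, and verifying the intersection-dimension inequality. The combinatorial ordering step is routine once one uses monotonicity of the filtration under passing to faces.
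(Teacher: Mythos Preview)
Your argument is correct. The paper itself does not prove this lemma: it is stated with a reference to \cite{hiraoka2018limit}, Lemma~2.11, and used as a black box thereafter. Your proof---inserting the simplices of $\cK_s(\mY)\setminus\cK_s(\mX)$ one at a time in a face-respecting order, observing that only $q$- and $(q+1)$-simplices can affect $Z_q(\cK_r)$ respectively $B_q(\cK_s)$, and using the elementary fact that if $V\subset V'$ with $\dim V'/V\le 1$ then $\dim(V'\cap U)-\dim(V\cap U)\in\{0,1\}$---is precisely the standard argument and is essentially what \cite{hiraoka2018limit} do. One small remark: in the $q$-simplex case you might add explicitly that $\delta_2\le\delta_1$ (the intersection cannot grow if $Z_q(\cK_r)$ does not), so the per-step change lies in $\{0,1\}$ rather than merely $\{-1,0,1\}$; this is implicit in your reasoning but worth stating.
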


\subsection{Stabilization}\label{Subsection_Stabilization}
We start with the proof of the fundamental Theorem~\ref{Thrm:StrongStabilization}; it uses Proposition~\ref{P:Encounters} in its last step. While this proposition is stated immediately after the proof of Theorem~\ref{Thrm:StrongStabilization}, it is of course helpful to first read this proposition  before passing to this last step.

Subsequently, we derive Theorem~\ref{T:StrongStabAddOne} and Lemma~\ref{L:StrongAndWeakStabilization}. The proof of Theorem~\ref{Thrm:StrongStabilizationAppl} is deferred to the Appendix~\ref{AppendixTightnessOfStabilization}.

\begin{proof}[Proof of Theorem~\ref{Thrm:StrongStabilization}]
If $q=0$, $\wt\rho_0^{\,r}$ is clearly finite. So we can assume that $q>0$ and we have to study chains that prevent $\wt\rho_q^{\,r}(\cP(\lambda)\cup Q_1,Q_2)$ from being finite. We can consider $\cP = \cP(1)$ because we consider a general positive $r$. Moreover, we can assume that the circumcenter $z_{Q_2}$ of $Q_2$ coincides with the origin. We write $a^*(r) = L_{Q_2} + \mu(r)$ with $L_{Q_2}$ being the circumradius of $Q_2$. (Here, $\mu(r) = r$ for the Vietoris-Rips and $\mu(r)=2r$ for the \v Cech filtration.) Clearly, as $Q_1,Q_2$ are finite, $\wt\rho_q^{\,r}$ is finite if the random geometric graph $G(\cP,\mu(r))$ does not percolate. If $G(\cP,\mu(r))$ percolates, $\wt\rho_q^{\,r}$ is infinite if and only if for each finite information horizon $R\in\R_+$, $R \ge a^*(r)+\mu(r)$, there is a simplex $\sigma_{q,i}\in \cK_r( (\cP\cup Q_1 \cup Q_2)|_{B(0,R)})$, which intersects with the additional points $Q_2$ and which is negative until $R$, but we cannot exclude the possibility that it might become positive ultimately. 
Formally, this means there is a chain 
\begin{align}\label{E:StrongStabilization0}
	\tau = \sum_{i} \sigma_i,
\end{align}
where $\sigma_i \in \bigcup_{n\in\N} \cK_r( (\cP\cup Q_1)|_{B(0,n)})$
are $q$-simplices, such that the boundary of the restriction of $\tau$ to $\cK_r( (\cP\cup Q_1)|_{B(0,R)})$ consists of two disjoint $(q-1)$-cycles which are not boundaries. More precisely, set
\begin{align}\label{E:StrongStabilization1}
	\tau_R := \tau|_{B(0,R)} := \sum_{i} \sigma_i\1{\sigma_i \in \cK_r((\cP\cup Q_1)|_{B(0,R)}) }.
	\end{align}
Then, for each $R\ge a^*(r)+\mu(r)$, we have $\partial \tau_R = e_1 + e_{2,R}$, where
\begin{align}
	\begin{split}\label{E:StrongStabilization1b}
		& e_1,e_{2,R} \in Z_{q-1}( \cK_r( (\cP\cup Q_1)|_{B(0,R)})) \setminus B_{q-1}( \cK_r( (\cP\cup Q_1)|_{B(0,R)})) \\
		&\qquad\text{such that } e_{2,R} \subseteq B(0,R)\setminus B(0,R-2\mu(r)) \\
		&\qquad\text{ and $e_1 \subseteq B(0,R_0)$ for a certain $R_0\in \R_+$};
	\end{split}
\end{align} 
here the set inclusions for $e_{2,R}$, resp., $e_1$ are to be understood as usual inclusions between subsets of the Euclidean space.
The cycle $e_1$ becomes a boundary of $\tau|_{B(0,R)}$ when including the additional points of $Q_2$, i.e., $e_1 = \sum_j \nu_j$, where the $\nu_j$ are $(q-1)$-simplices in $\cK_r((\cP\cup Q_1\cup Q_2)|_{B(0,a^*(r)+\mu(r))})$ and $e_1 \in B_{q-1}( \cK_r((\cP\cup Q_1\cup Q_2)|_{B(0,a^*(r)+\mu(r))}) )$. Consequently, $e_{2,R}$ becomes a boundary in this case as well, i.e., $e_{2,R} \in B_{q-1}(\cK_r((\cP\cup Q_1\cup Q_2)|_{B(0,R)}))$.  See also Figure~\ref{fig:TubeSituation_Illustration} for an illustration in the special case of such a 1-dimensional chain.

The existence of a $\tau$ as above is equivalent to $\wt\rho_q^{\,r}$ being infinite. We show in the remainder of the proof: For a homogeneous Poisson process modulo a finite point process such chains cannot occur. More precisely, we show that the cycle $e_{2,R}$ cannot exist for all $R\ge a^*(r)+\mu(r)$. For this purpose, we can assume that $Q_1=\emptyset$ because the question whether such cycles $e_{2,R}$ exist for all $R\ge a^*(r)+\mu(r)$ is an asymptotic property of the Poisson process (this follows also from details given below). Moreover, it suffices to study the case where $Q_2$ consists of a single point and by translation invariance we can assume that this is the origin, viz., $Q_2=\{0\}$.

In the remainder of the proof, we study chains which generalize the chain $\tau$ given in \eqref{E:StrongStabilization0} and show with a Burton-Keane argument that the existence of such chains contradicts the properties of the stationary Poisson process. To this end, we proceed in four steps. We introduce general {\it maximal} chains in the first step. In the second step, we show that the number of these maximal chains is a.s.\ constant. In the third and fourth step, we prove that the number of these maximal chains is a.s. zero.

{\it Step 1 -- maximal chains.} We begin with $q$-chains $\tau$ of the following type: $\tau=\sum_{i}\sigma_i$, where the simplices $\sigma_i$ are in $\bigcup_{n\in\N} \cK_r( \cP|_{B(0,n)})$ for all $i$, and there are $y\in\R^d$ (`a shift of the origin') and $R_0\in\R_+$ such that the following property (P) holds:
\begin{figure}[htbp]
	\centering
		\includegraphics[width=1.00\textwidth, trim=3cm 2.5cm 3cm 2cm, clip=true]{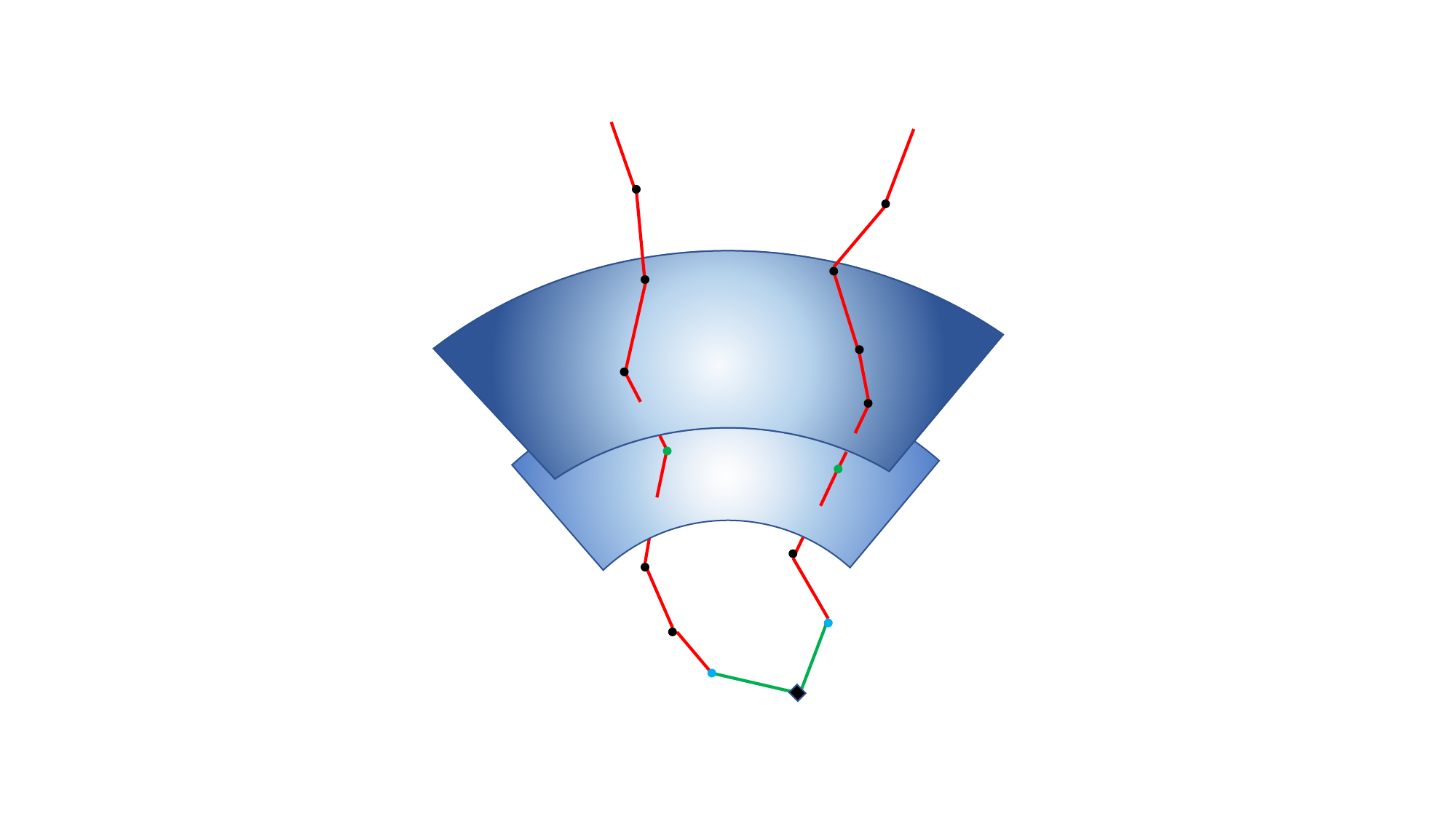}
	\caption{Illustration of a chain $\tau$ consisting of 1-dimensional simplices (red, green) from Poisson points (black, blue and green dots) and an additional point (black diamond) which is located inside $Q(0)$. The 1-simplices between Poisson points are red, the 1-simplices between a Poisson point and the additional point are green. The layers depict two spheres of $B(z,R )$ and $B(z,R-2\mu(r))$. $e_1$ corresponds to the two blue dots (to which the two green 1-simplices are attached), $e_{2,R}$ to the green dots shown between the two layers}.
	\label{fig:TubeSituation_Illustration}
\end{figure}
\begin{description}
\item {\bf (P)} For all $R\ge R_0$ and with
$$
	\tau_{y,R} := \sum_{i} \sigma_i \1{ \sigma_i\in \cK_r( \cP|_{B(y,R)} )},
$$
we have that $\partial \tau_{y,R} = e_1 + e_{2,R}$, where
\begin{align}
\begin{split}\label{E:StrongStabilization2}
	 e_1 \in & Z_{q-1}( \cK_r( \cP|_{B(y,R)})) \setminus B_{q-1}( \cK_r( \cP|_{B(y,R)})) \\
	&\text{ but } e_{1} \in B_{q-1}( \cK_r( (\cP\cup \{y\} )|_{B(y,a^*(r)+\mu(r))}))
	\end{split}
 \shortintertext{and}
\begin{split}\label{E:StrongStabilization3}
	e_{2,R} &\subseteq B(y,R)\setminus B(y,R-2\mu(r)) \\
	&\text{ and } e_{2,R} \in Z_{q-1}( \cK_r( \cP|_{B(y,R)})) \setminus B_{q-1}( \cK_r( \cP|_{B(y,R)}).
\end{split}
\end{align}
\end{description}
\vspace*{0.3cm}
So loosely speaking, $\tau_{y,R}$ is a shifted analogue of $\tau_R$ from \eqref{E:StrongStabilization1}.
We call a chain $\tau$ satisfying (P) a {\em chain of type (P)}.

Now, we might have two chains $\tau$ and $\tau'$ of type (P) that have the same local cycle, i.e., their cycles $e_1, e'_1$ (as characterized by \eqref{E:StrongStabilization2}) are homologous (i.e., they differ by a boundary only). We say that $\tau$ and $\tau'$ are equivalent and write $\tau\sim\tau'$.
In this case, we can consider the chain $\tau^*$ which consists of the union of the simplices of $\tau$ and $\tau'$.

This leads to the following notion of maximality: Consider an arbitrary but fixed chain $\tau$ of type (P) with local cycle $e_1$. Then take the union over all chains $\tau'$ of type (P) equivalent to $\tau$ and call the resulting chain the maximal chain $\tau_{\max}$. Formally, given $\tau$ with corresponding $e_1,$ the maximal chain is
$$
	\tau_{max} = \sum_{\sigma \in I} \sigma, \quad \text{ where } \quad I = \bigcup_{\substack{\tau':
	\tau' \sim \tau} } \, \bigcup_{\sigma\in\tau'} \{\sigma\}. 
$$
Plainly, if $\tau$ and $\tau'$ are equivalent, then $\tau_{\max} = \tau'_{\max}$.

{\it Step 2 -- ergodicity and invariance of maximal chains.} We show that the number of maximal chains, which we denote by $M(\cP)$, is almost surely constant. More precisely, we show that
$$
	M(\cP) = m \ a.s. \qquad\text{ for some } m \in \N \cup \{0, +\infty\}.
$$
To this end, consider the standard decomposition of the stationary Poisson process $\cP$ into a countable sum of independent Poisson processes restricted to the cubes $Q_z$, viz.,
\begin{align}\label{E:DecompPoisson}
	\cP = \sum_{z\in\Z^d} \cP|_{Q_z} = \sum_{n\in\Z} \bigg\{ \sum_{z'\in\Z^{d-1}} \cP|_{Q_{(n,z')}} \bigg\}.
\end{align}
Next, for $m\in\N\cup\{0,+\infty\},$ define the events
$$
	A_{m} \coloneqq \{ \omega\in\Omega \ | \ M( \cP(\omega)) = m \}.
$$
Consider the shift operator $T$ acting on the first coordinate only. Applying $T$ to a set $P\subseteq\R^d$ gives $T(P) = \{ y+ (1,0,\ldots,0)^t | y\in P\}$.
Define the ``$k$th translation of $A_{m}$'' by
$$
	T^k(A_{m} ) \coloneqq \{ \omega\in\Omega \ | \ M( T^k(\cP(\omega))) = m \}, \quad k\in\Z.
$$
Then $M(\cP) = M( T(\cP))$ and consequently, $T(A_{m}) = A_{m}$ for each $m$, i.e., $A_m$ is an invariant event. A standard argument which relies on the decomposition \eqref{E:DecompPoisson} into iid random variables now yields that $\p(A_{m}) \in \{0,1\}$ for each $m$. We refer to the book of Klenke \cite{klenke2013probability}, Example 20.26, for a formal proof.

{\it Step 3 -- insertion tolerance, $M(\cP) \notin\N$ a.s.} Assume that $M(\cP) = m$ with probability 1 for some $m\in\N$. Define the generic ``annulus'' (w.r.t.\ the maximum norm)
$A_{t,s} = [-t,t ]^d \setminus (-s,s)^d$ for $s<t$. 

We rely on the following decomposition of $\cP$: We fix $n\in\N$ large enough - see the paragraph below \eqref{E:SizeN} for details. Denote the restriction of $\cP$ to $Q(0,n)=[-n,n]^d$ by $\cPc_n$ and its restriction to $\R^d\setminus [-n,n]^d$ by $\cPd_n$. Then $\cPc_n$ and $\cPd_n$ are independent. Assume $\cPc_n$ is defined on the generic probability space $(\Omega^\circ,\cA^\circ,\p^\circ)$ and $\cPd_n$ is defined on $(\Omega^\dagger,\cA^\dagger,\p^\dagger)$.
Next, for $n\in\N$ and $\epsilon \in (0,\mu(r))$, consider the event
\begin{align}
	D_{n,\epsilon}	& = \Big\{\text{for each $\tau_{max}$: $\partial (\tau_{max}|_{A_{n+\mu(r),n} })$ contains a cycle $c$ inside $A_{n+\mu(r)-\epsilon,n}$ with } \nonumber\\
	&\qquad \text{$c\in Z_{q-1}( \cK_r( \cP|_{Q(0,n+\mu(r))})) \setminus B_{q-1}( \cK_r( \cP|_{Q(0,n+\mu(r))})$}\Big\}. \label{E:SizeN}
\end{align}
(Here $\tau_{max}|_{A_{n+\mu(r),n} }$ is the restriction of $\tau_{max}$ to  $A_{n+\mu(r),n}$.)
We choose $n\in\N$ sufficiently large such that $D_{n,0}$ occurs with positive probability $\eta>0$.
Next, define the event 
$$
	E_{n,\epsilon} = \{ \omegad \in \Omega^\dagger \ | \ \exists\, \wt\omega^\circ \in \Omega^\circ : ( \wt\omega^\circ,\omegad) \in D_{n,\epsilon}  \}.
	$$
Note that $D_{n,\epsilon}$, and thus also $E_{n,\epsilon}$, is decreasing in $\epsilon$,  and clearly, $\p^\dagger(E_{n,0})>0$ as well. Moreover, $E_{n,0} \setminus E_{n,\epsilon}\downarrow N$ for a $\p^\dagger$-nullset $N$ as $\epsilon\downarrow 0$; hence, we can fix some $\epsilon^*>0$ sufficiently small such that $\p^\dagger(E_{n,\epsilon^*})>0$.

For more technical details underlying the following remaining argument of Step 3, we refer to the proof of Theorem~\ref{Thrm:StrongStabilizationAppl} (1). First, partition $A_{n,n-\delta}$ with subcubes $(C_i)_{i\in I}$ of edge length $\delta$ for $\delta \le \mu(r)(1-1/\sqrt{2})/\sqrt{d}$. We can assume that $n/\delta\in\N$. Let $G_n$ denote the event that each $C_i$ contains at least $d$ points of $\cPc_n$. Then 
$\p^\circ(G_n)>0$, and by construction, $E_{n,\epsilon^*}$ and $G_n$ are independent.
Consequently, $\p^\circ\otimes\p^\dagger(E_{n,\epsilon^*} \cap G_n ) = \p^\circ(E_{n,\epsilon^*})\p^\dagger(G_n) > 0$. However, if both $E_{n,\epsilon^*}$ and $G_n$ occur, there are no maximal chains because each potential feature associated to some $\tau_{max}$ already terminates in $A_{n,n-\delta}.$ This is because the points of the Poisson process are sufficiently dense inside $A_{n,n-\delta}.$  Hence, we arrive at a contradiction, and thus $M(\cP) \notin\N$ a.s.

{\it Step 4 -- the Burton-Keane argument, $M(\cP)\neq +\infty$ a.s.} We begin with general considerations for the volume-boundary argument applied to cycles. The $d$-dimensional Lebesgue measure of $A_{s+\Delta,s}$ is $
	2^d \big( (s+\Delta )^d - s ^d \big) = 2^d \Delta s^{d-1} + o\big( s^{d-1} \big)$ if $s\to\infty$ and if $\Delta$ is bounded above by a constant.
 
Moreover, we have from the definition of the {\v C}ech and Vietoris-Rips filtration over a point cloud: For each $(q-1)$-cycle $\wt e$ that is contained in the complex corresponding to the filtration parameter $r$ and that is not a boundary (i.e., a non-trivial cycle), there exists a convex set $\cJ_r$ intersecting the convex hull of $\wt e$ such that on the one hand $\cJ_r \cap \cP = \emptyset$ and on the other hand the $d$-dimensional volume of $\cJ_r$ is positive and bounded away from zero, i.e., $V(\cJ_r)\ge \delta_r$ for some $\delta_r>0$ depending on the filtration type but not on the cycle $\wt e$. Hence, the total number of such cycles that are not boundaries but are located in $A_{s+\Delta,s}$ is of order
\begin{align}\label{E:StrongStabilization5}
		\frac{2^d \Delta s^{d-1}  + o\big( s^{d-1} \big)}{V(\cJ_r)} = O(s^{d-1})
\end{align}
for $s\to\infty$ and $\Delta$ bounded above.

The remainder of this step follows very similar ideas as in the proof of Theorem~4.6 in Meester and Roy \cite{meester1996continuum} where it is shown that the number of vacant components in the standard Boolean model is a.s.\ not equal to $\infty$. To this end, we rely on a similar notation to facilitate the comparison. Also, the remainder of this step crucially relies on Proposition~\ref{P:Encounters} below, which considers the existence of what we call {\em encounter chains} (see Proposition~\ref{P:Encounters} for their definition).

It follows from Proposition~\ref{P:Encounters} that (under the assumption of infinitely many maximal chains) the event $E_m$ (detailed in this proposition) has positive probability of at least $\eta>0$, say, for all $m\in\N$ sufficiently large. So, we assume that $\p(E_m) \ge \eta$ for all $m$ sufficiently large and show that this leads to a contradiction. We can translate $E_m$ by a vector $2mz$ for $z\in\Z^d$ and call this event $E_m^{2mz}$. Then for each $L\in\N$
\begin{align}
\begin{split}\label{E:StrongStabilization6}
	&\E{ \sum_{z\in\Z^d} \1{E^{2mz}_m \text{ occurs and } Q(2mz,m)\subseteq Q(0,Lm)  } }  \\
	&\ge \eta \cdot \#\{ z\in\Z^d : Q(2mz,m)\subseteq Q(0,Lm) \} \ge \eta (L-1)^d.
\end{split}
\end{align}	
Let $\fR$ be the set of the following encounter configurations in $Q(0,Lm)$: An encounter configuration $r$ lies in $\fR$ if and only if $ Q(2mz,m)\subseteq Q(0,Lm)$ such that $r = \tau^* |_{Q(2mz,m)}$ is an encounter configuration for some encounter chain $\tau^*$. Then $\E{\#\fR}\ge \eta (L-1)^d$.

We consider the branch $b=b_{r}^{(i)}$ for each $r\in \fR$ and $i\in\{1,2,3\}$. The boundary $\partial b$ intersected with $A_{Lm+\mu(r),Lm}$ consists of $(q-1)$-cycles $e_i$ that are not boundaries themselves. We can assume that each $e_i$ is minimal in the sense that we cannot decompose the chain $e_i$ into two or more disjoint $(q-1)$-cycles. We define $\fV_b$ as the set that contains all these minimal $(q-1)$-cycles. Further, set $\fV = \bigcup_{r\in \fR} \bigcup_{i=1}^3 \fV_{b_r^{(i)}}$ which is the union of all minimal $(q-1)$-cycles that are not boundaries and that are contained in $A_{Lm+\mu(r),Lm}$. Hence, $c L^{d-1} \ge \# \fV$ by \eqref{E:StrongStabilization5} for a suitable $c\in\R_+$.
Further, for $r\in \fR$ and $i\in\{1,2,3\},$ define  
\begin{align*}
	\fC_r^{(i)} = \{r'\in \fR: r' \subseteq b_r^{(i)} \} \cup \fV_{b_r^{(i)} }
\end{align*}
as a set of chains. Then clearly $\# \fC_r^{(i)} \ge \# \fV_{b_r^{(i)}} \ge 1 =: \fK$ and the following relation holds for each $r,r'\in \fR$: Either $\big[{r} \cup \bigcup_i \fC_r^{(i)} \big] \cap \big[{r'} \cup \bigcup_j \fC_{r'}^{(j)} \big]=\emptyset$ or there are $i,j$ which satisfy
$$
	\fC_r^{(i)}\supseteq \{r'\} \cup \bigcup_{\ell\neq j} \fC_{r'}^{(\ell)} \text{ and } \fC_{r'}^{(j)}\supseteq \{r\} \cup \bigcup_{\ell\neq i} \fC_{r}^{(\ell)}.
$$ 
Define $\fS := \fR \cup \bigcup_{r\in R} \bigcup_{i=1}^3 \fC_r^{(i)}$. Then the assumptions of Lemma~3.2 in Meester and Roy \cite{meester1996continuum} are satisfied. Consequently, $\# \fS \ge \fK ( \#\fR+2)+\#\fR$. Since $\# \fS = \# \fR + \# \fV$, it follows that $\# \fV \ge \# \fR +2$. Consequently, we arrive at the following inequalities
\begin{align*}
	c L^{d-1} \ge \E{ \# \fV } &\ge \E{ \# \fV \1{\fR \neq \emptyset} }\ge  \E{ (\# \fR + 2) \1{\fR \neq \emptyset}} \\
	&= \E{ \# \fR + 2 \1{\fR \neq \emptyset}} \ge (L-1)^d \eta
\end{align*}
for all $L\in\N$. This leads to a contradiction if $L$ is sufficiently large. Therefore, $\eta=0$ and $M(\cP)$ equals $\infty$ with probability 0. This completes the fourth step.

All in all, this contradicts the initial assumption that with positive probability the chain $\tau$ from \eqref{E:StrongStabilization0} exists and satisfies ({\bf P}). Consequently, $\wt \rho^{\,r}_q$ is $a.s.$ finite.
\end{proof}

To state and prove the upcoming proposition, we rely once more on a suitable decomposition of the homogeneous Poisson process $\cP$ with unit intensity. Fix $n\in\N$. We choose a Poisson process $\cPd_n$ on $\R^d\setminus [-n,n]^d$ which is defined on the probability space $(\Omega^\dagger,\cA^\dagger,\p^\dagger)$. And we choose a complementary Poisson process $\cPc_n$ on $Q(0,n)=[-n,n]^d$ which is defined on $(\Omega^\circ,\cA^\circ,\p^\circ)$. Obviously, $\cPd_n$ and $\cPc_n$ are independent.

\begin{proposition}[Encounters of maximal chains]\label{P:Encounters} For $m \in \N$, let  $E_m$ denote the event given in the subsequent paragraph. If the numbers of maximal chains is infinite (as it is assumed in step 4 of the proof of Theorem~\ref{Thrm:StrongStabilization}), we have that $\liminf_{m\to\infty} \p(E_m)>0$.

Define $n = m - 2 \ceil{\mu(r)}$ and decompose $\cP$ into $\cPc_n$ and $\cPd_n$ as above. Set\\[5pt]
$E_m := \Big\{ (\omegac,\omegad)\in \Omega^\circ\times \Omega^\dagger \mid \cPd_n(\omegad) $ admits disjoint chains $b^{(1)}, b^{(2)},b^{(3)}$  which fullfil 
\begin{itemize}
	\item[(a)]  there is an $\wt\omega^\circ\in\Omega^\circ$ such that each $b^{(i)}$ can be completed to a maximal chain with elements of $\cPc_n(\wt\omega^\circ)$ and $\cPd_n(\omegad)$
	\item[(b)] each $\partial(b^{(i)}|_{A_{m,n}})$ decomposes into a disjoint union of $e^{(i)}_1$ and $e^{(i)}_2$ that are $(q-1)$-cycles but not boundaries in the complex generated by $\cPd_n(\omegad)$ and $e_1^{(i)} \subseteq A_{n+\mu(r)-\epsilon,n}$;
	\item[(c)] there is a chain $r$ in the complex generated by the elements of $\cPc_n(\omegac)$ and a chain $c$ in $A_{n+\mu(r),n-\mu(r)}$ generated by the elements of $\cPc_n(\omegac) \cup \cPd_n(\omegad)$ such that $\tau := r + c + \displaystyle{\sum_{i=1}^3 b^{(i)}}$ satisfies $\partial (\tau|_{Q(0,m)} ) = \displaystyle{\sum_{i=1}^3 e_2^{(i)} \Big\}.}$
	\end{itemize}

If $E_m$ occurs, we call $Q(0,m)$ an encounter box, $Q(0,n)$ a central box, $r$ an encounter configuration, $c$ an intermediate configuration, $b^{(1)}, b^{(2)}, b^{(3)}$ branches and $\tau$ an encounter chain.

Moreover, we can translate $E_m$ over the vector $y=2mz$ ($y\in\Z^d$) and denote this event by $E_m^{y}$. If $E_m^y$ occurs, $Q(y,m)$ is an encounter box and $Q(y,n)$ a central box.
\end{proposition}

\begin{figure}[h]
	\centering
		\reflectbox{\includegraphics[width=.80\textwidth, trim=1.9cm 4.4cm 0.4cm 3cm, clip=true]{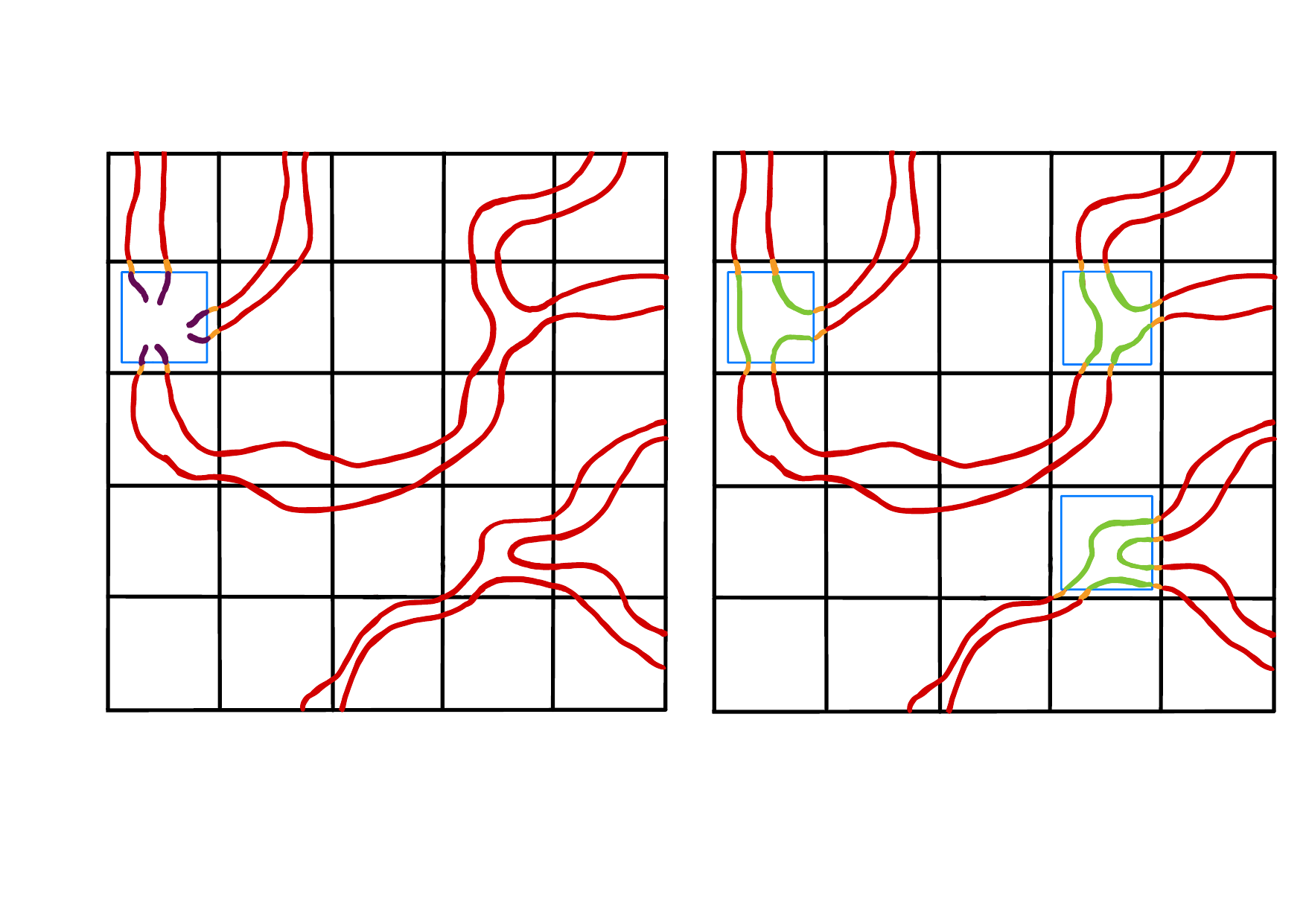}}
	\caption{Illustration of encounters of maximal chains in two dimensions (in a reduced set-up and not true to scale).
	 The left figure depicts several encounters inside boxes $Q(y,m)$ (for certain $y\in\Z^d$): The (blue) central boxes are located inside the (black) encounter boxes which are part of the (black) lattice which partitions the plane. Each (green) encounter configuration merges three branches (red and partly in green and orange) through the corresponding (orange) intermediate configuration.
	The right figure considers a specific central box $Q(y,n)$ (blue). Here a suitable configuration (violet) inside the central box converts the corresponding branches to maximal chains.}
	\label{fig:Encounters_Illustration}
\end{figure}

\begin{proof}
We assume the existence of infinitely many maximal chains with probability one and then we build consecutively an event $H_m$ which is contained in $E_m$. First let $F_m$ be the event that there are (at least) three disjoint maximal chains $\tau^{(1)},\tau^{(2)},\tau^{(3)}$ with corresponding $e^{(1)}_1,e_1^{(2)},e^{(3)}_1$ (as in \eqref{E:StrongStabilization2}) which are all located inside $Q(0,n)$, $n=m-2\ceil{\mu(r)}$. (If there are more than three such maximal chains, we choose three of them at random.) Then $\liminf_{m\to\infty} \p(F_m)=1>0$ by monotone convergence. 

Now, fix $m\in\N$ such that $F_m$ has positive probability. If $(\omegac,\omegad)\in F_m$, we define the three disjoint branches $b^{(i)} = \tau^{(i)}|_{\R^d\setminus Q(0,n)}$, $1\le i\le 3$. Then, for all $\epsilon>0$ sufficiently small, the event
\begin{align*}
	G_m &=\{ (\omegac,\omegad) \in F_m \ | \ \forall i: \ \partial( b^{(i)}|_{A_{m,n}} ) = f_1 + f_2  \text{ such that $f_1$ and $f_2$ are cycles }\\
	&\qquad\qquad \text{ but not boundaries w.r.t.\ $\cK_r(\cP|_{ Q(0,m)\setminus Q(0,n)})$ and $f_1\subseteq A_{n+\mu(r)-\epsilon,n}$ } \} 
\end{align*}
has positive probability, too.

Now let $H_m$ consist of all $(\omegac,\omegad)\in \Omega^\circ\times \Omega^\dagger$ such that on the one hand, for this very $\omegad$, there is an $\wt\omega^\circ\in\Omega^\circ$ with $(\wt\omega^\circ,\omegad)\in G_m$ (for short, ``$\omegad$ enjoys the property $P_1$'') and on the other hand there are the following chains: A chain $r$, constructed from the elements of $\cPc_n(\omegac)$, and a chain $c$, constructed from the elements of $[\cPc_n(\omegac)\cup \cPd_n(\omegad)] \cap A_{m,n}$ with the property that the chain $\tau = r + c + \sum_{i=1}^3 b^{(i)}$ satisfies $\partial( \tau|_{Q(0,R)} ) \subseteq A_{R,R-\mu(r)}$ for all $R\ge m$ (for short, ``the pair $(\omegac,\omegad)$ enjoys the property $P_2$'').

Then $H_m$ can be formulated in an abstract way as
$$
	H_m = \{(\omegac,\omegad)\in \Omega^\circ\times \Omega^\dagger :  \text{ $\omegad$ has $P_1$ and $(\omegac,\omegad)$ has $P_2$}\}.
$$

We show that $H_m$ has positive probability. Since $G_m$ has positive probability, we have on the one hand
$$\p^{\dagger}(\{\omegad: \omegad \text{ has } P_1 \}) \ge \p^\circ\otimes\p^\dagger(G_m) > 0
$$
and we have on the other hand
\begin{align*}
	\p(H_m) &= \int_{\Omega^\dagger} \1{\omegad \text{ has } P_1 }     \p^{\circ}(\{ \omegac :  (\omegac,\omegad) \text{ has } P_2 \} ) \ \p^{\dagger}(\diff \omegad).
\end{align*}
Consequently, it remains to show $\p^{\circ}( \{ \omegac :  (\omegac,\omegad) \text{ has } P_2 \} ) > 0$ for almost all $\omegad$ which have $P_1$. 

To this end, we choose an $\omegad$ which has $P_1$. Conditionally on $\omegad$ and relying on classical results for triangulations, there is a finite set $P$ inside $Q(0,n)$ (with all elements in general position) such that $r$, $c$ and $\tau$ exist as laid out and all simplices, which are involved in the chain $r$, have a filtration time of at most $\mu(r)/2$. This entails that we can move the vertices of a specific $q$-simplex $\sigma$ in $\cK_r(P)$ by at most $\mu(r)/(4(q+1))$ and still have a filtration time of $\sigma$ of at most $3\mu(r)/4$.

Moreover, as $P\cup \cPd_n(\omegad)|_{Q(0,n+\mu(r))}$ is finite, there is a $\delta>0$ such that all vertices of $P$ can be moved by at least $\delta>0$ without adding (resp.\ removing) another simplex to (resp.\ in) the complex $\cK_r(P \cup \cPd_n(\omegad)|_{Q(0,n+\mu(r))} )$. Note that the probability that the elements of $\cPd_n|_{Q(0,n+\mu(r))}$ do not entail a sharp filtration time of exactly $\mu(r)$ is 1, which is what we implicitly assume in the choice of $\omegad$. (This effect of local constancy is also studied by Chazal and Divol \cite[Lemma 13]{chazal2018density} for filtration functions generated by (random) point clouds.)

Thus, for this specific $\delta>0$, the probability of the following event is positive: There is a realization of $\cPc_n$ which has $\# P$ elements and for each $p\in P$ there is an element of $\cPc_n$ in the $\delta$-neighborhood of $p$.

This proves $\p^{\circ}( \{ \omegac :  (\omegac,\omegad) \text{ has } P_2 \} ) > 0$ for almost all $\omegad$ which have $P_1$. Thus, $H_m$ occurs with positive probability and we arrive at $\p(E_m) \ge \p(H_m) > 0$. This completes the proof.
\end{proof}

\begin{proof}[Proof of Theorem~\ref{T:StrongStabAddOne}]
Let $q\in\{0,\ldots,d-1\}$ be fixed. For simplicity, we write $\cP=\cP(\lambda)$ and $S=S^{(r,s)}_q$. (Recall that $S$ is defined in (\ref{Def:StrongStabilizationRadius2}).) $S$ is $a.s.$ finite by Theorem~\ref{Thrm:StrongStabilization}.
By the definition of $\wt\rho^{\,r}_q$ and $\wt\rho^{\,s}_{q+1}$, the following two functionals do not change when changing the configuration outside $B(0,S)$:
\begin{align}\label{E:StrongStabAddOne1}
	A \mapsto \dim \frac{Z_q (\cK_r(\cP\cup\{0\}\cup A))}{Z_q (\cK_r(\cP\cup A))} \text{ and } A \mapsto \dim \frac{B_q (\cK_s(\cP\cup\{0\}\cup A))}{B_q (\cK_s(\cP\cup A))},
\end{align}
where $A\subseteq\R^d\setminus B(0,S)$ is finite. 

In order to conclude the case for the persistent Betti function $\beta^{r,s}_q$, we show that
\begin{align}\label{E:StrongStabAddOne2}
	A \mapsto \dim \frac{Z_q (\cK_r(\cP\cup\{0\}\cup A)) + B_q (\cK_s(\cP\cup\{0\}\cup A)) }{Z_q (\cK_r(\cP\cup A)) + B_q (\cK_s(\cP\cup A))},
\end{align}
$A\subseteq\R^d\setminus B(0,S)$ finite, is constant, too. This implies the case for the persistent Betti function by making use of the dimension formula  ($\dim (U+V) + \dim (U\cap V) = \dim U + \dim V$ for two finite-dimensional linear spaces $U,V$) in conjunction with (\ref{E:StrongStabAddOne1}).

On the one hand, assume that the map in (\ref{E:StrongStabAddOne2}) increases, when going from a set $A_1$ to another set $A_2$ outside $B(0,S)$. Then there is a $q$-chain $c^*$ which is not 0 modulo $Z_q (\cK_r(\cP\cup\{0\}\cup A_2))$ and not 0 modulo $B_q (\cK_s(\cP\cup\{0\}\cup A_2))$ but 0 modulo $Z_q (\cK_r(\cP\cup A_2))$ or 0 modulo $B_q (\cK_s(\cP\cup A_2))$. This contradicts the conclusion for the mappings in \eqref{E:StrongStabAddOne1}.

On the other hand, assume that the map decreases, when going from a set $A_1$ to $A_2$. This time there is a $q$-chain $c^*$ which is 0 modulo $Z_q (\cK_r(\cP\cup\{0\}\cup A_2))$ or 0 modulo $B_q (\cK_s(\cP\cup\{0\}\cup A_2))$ but not 0 modulo $Z_q (\cK_r(\cP\cup A_2))$ and not 0 modulo $B_q (\cK_s(\cP\cup A_2))$. This is again a contradiction.

Consequently, the persistent Betti number $\beta^{r,s}_q$ does not change with the configuration outside $B(0,S)$.
\end{proof}

\begin{proof}[Proof of Lemma~\ref{L:StrongAndWeakStabilization}]
Clearly, if the right-hand side is infinite, there is nothing to prove. So, assume that it is finite. Define $\cK_{s,a} = \cK_s( P\cap B(z_Q,a))$ and $\cK'_{s,a} = \cK_s((P\cup Q)\cap B(z_Q,a))$. We show
\[
			\dim Z_q( \cK'_{r,a}) - \dim Z_q( \cK_{r,a}) = \text{const. } \text{ and }  \dim B_q(\cK'_{s,a}) - \dim B_q(\cK_{s,a}) = \text{const. } 
\]
for all $a \ge  S_q^{(r,s)}(P,Q)$. By definition of $\wt\rho_q^{\,r}(P,Q)$, this is true for difference involving the cycle groups $Z_q( \cK'_{r,a})$ and $Z_q( \cK_{r,a})$ and $a\ge \wt\rho_q^{\,r}(P,Q)$.
Moreover, again by the definition of $S_q^{(r,s)}(P,Q)$, the difference $ \dim B_q \cK'_{s,a} - \dim B_q \cK_{s,a}$ is constant for all $a\ge S_q^{(r,s)}(P,Q)$. 

 Using the dimension formula $\dim (U+V) + \dim (U\cap V) = \dim U + \dim V$, it only remains to consider the difference $\dim ( Z_q (\cK'_{r,a}) + B_q (\cK'_{s,a}) ) - \dim( Z_q (\cK_{r,a}) + B_q (\cK_{s,a}) )$.

First, assume that this difference increases at an $a \ge S_q^{(r,s)}(P,Q)$. So, there is a $q$-chain $c^*$ such that $c^*\neq 0$ mod $B_q (\cK'_{s,a-})$ and $c^*\neq 0$ mod $Z_q (\cK'_{r,a-})$ but $c^*=0$ mod $B_q (\cK_{s,a-})$ or $c^*= 0$ mod $Z_q (\cK_{r,a-})$. This is a contradiction.
Second, if the difference decreases at an $a \ge S_q^{(r,s)}(P,Q)$, there is again a $q$-chain $c^*$ such that $c^*\neq 0$ mod $B_q (\cK_{s,a-})$ and $c^*\neq 0$ mod $Z_q (\cK_{r,a-})$ but $c^*=0$ mod $B_q (\cK'_{s,a-})$ or $c^*= 0$ mod $Z_q (\cK'_{r,a-})$. Again, this is a contradiction.
\end{proof}

\subsection{The asymptotic normality for the Poisson process}\label{S:Poisson}

Recall that $\cP$ and $\cP'$ denote independent Poisson processes with unit intensity on $\R^d$ and $B_n = [-2^{-1} n^{1/d},2^{-1} n^{1/d}]^d$. For $z \in \Z^d$, set $\cP''(z) = (\cP\setminus Q(z) )\cup (\cP'\cap Q(z))$, and let
$$\Delta^{r,s}_z (B_n) =  \beta^{r,s}_q(\cK( \cP \cap B_n)) - \beta^{r,s}_q(\cK( \cP''(z) \cap B_n)).$$
\begin{lemma}\label{L:asStabilizationPoisson}
For each $(r,s)\in\Delta$ and for each $z\in \Z^d$, there are random variables $\Delta^{r,s}_z(\infty)$ and $N_0=N_0(z,(r,s))\in\N$ such that $\Delta^{r,s}_z(B_n) \equiv \Delta^{r,s}_z(\infty)$ $a.s.$ for all $n\ge N_0$.
\end{lemma}
\begin{proof}
Let $z\in\Z^d$ and $(r,s)\in\Delta$ be arbitrary but fixed.
We utilize Lemma 3.1 in \cite{penrose2001central} to obtain a random variable $\Delta^{r,s}_z(\infty)$ such that
$$
	\lim_{n\to\infty} \Delta^{r,s}_z(B_n) = \Delta^{r,s}_z(\infty) \text{ with probability 1}.
$$
The existence of $N_0$ follows from the fact that Betti numbers and consequently the $\Delta^{r,s}_z(B_n)$ are integer valued.
\end{proof}

\begin{proposition}\label{P:CovarianceUnitPoisson}
For each two pairs $(r,s),(u,v)\in\Delta$,
\begin{align*}
	\gamma((u,v),(r,s)) &\coloneqq \E{ \E{\Delta^{r,s}_0(\infty) | \cF_0} \ \E{ \Delta^{u,v}_0(\infty) | \cF_0} } \\
	&=			\lim_{n \rightarrow \infty } n^{-1} \covar{ \beta^{u,v}_q(\cK( \cP \cap B_n)) }{ \beta^{r,s}_q(\cK( \cP \cap B_n))	} .
\end{align*}
\end{proposition}

\begin{proof}
Let $(r,s)$ and $(u,v)$ be arbitrary but fixed. Define the two functionals
$$
	h_1 \coloneqq \beta^{r,s}_q(\cK( \cdot )) \text{ and } h_2 \coloneqq \beta^{u,v}_q(\cK( \cdot )) 
$$
First, observe that
\begin{align}
\begin{split}\label{E:CovEq}
	&2 n^{-1} \covar{ h_1( \cP \cap B_n) }{ h_2( \cP \cap B_n)	} \\
	&= n^{-1} \ \V{ h_1( \cP \cap B_n) +  h_2( \cP \cap B_n) }  \\
	&\quad - n^{-1} \V{ h_1( \cP \cap B_n) } - n^{-1} \V{ h_2( \cP \cap B_n) }.
	\end{split}
\end{align}
In the following we prove the convergence of each of the three terms on the right-hand side to an appropriate limit as $n\to\infty$.

Using the established strong stabilization (Theorem~\ref{T:StrongStabAddOne}), the convergence of each term in question follows with Theorem 3.1 of \cite{penrose2001central} once the Poisson bounded moments condition is satisfied, i.e., for $i\in\{1,2\}$
$$
	\sup_{A \in \cB: 0\in A} \E{ | h_i( [\cP\cap A] \cup \{0\} ) - h_i(\cP\cap A) |^4 } < \infty, 
$$
where $\cB = \{ B_n + x: x\in \R^d, n\ge 1\}$. Validity of the Poisson bounded moment condition follows immediately from the Geometric Lemma. We skip the details and refer to \cite{yogeshwaran2017random} (proof of Lemma 4.1) instead.

Thus, using Theorem 3.1 of \cite{penrose2001central},
\begin{align*}
	\lim_{n\to\infty} n^{-1} \V{ h_1( \cP \cap B_n) } = \E{ \E{ \Delta^{r,s}_0(\infty) | \cF_0 }^2 }, \\
	\lim_{n\to\infty} n^{-1} \V{ h_2( \cP \cap B_n) } = \E{ \E{ \Delta^{u,v}_0(\infty) | \cF_0 }^2 }.
\end{align*}
Moreover, using the additivity of the add one cost, the functional $h_1 + h_2$ enjoys the strong stabilization just as $h_1$ and $h_2$. Hence, applying Theorem~3.1 of \cite{penrose2001central} once more, yields
\begin{align*}
	\lim_{n\to\infty} n^{-1} \V{ h_1( \cP \cap B_n) + h_2( \cP \cap B_n) } = \E{ \E{ \Delta^{r,s}_0(\infty) + \Delta^{u,v}_0(\infty) | \cF_0 }^2 }.
\end{align*}
Consequently, relying on \eqref{E:CovEq}
$$
 \lim_{n\to\infty} 2 n^{-1} \covar{ h_1( \cP \cap B_n) }{ h_2( \cP \cap B_n)	} = 2 \E{ \E{ \Delta^{r,s}_0(\infty) | \cF_0 } \E{\Delta^{u,v}_0(\infty) | \cF_0 } }
$$
and the claim follows.
\end{proof}

\begin{proof}[Proof of Theorem~\ref{T:GaussianLimitPoisson}]
We show the multivariate asymptotic normality by considering finite linear combinations of persistent Betti numbers. For $a_1,\ldots,a_\ell\in\R$ and $n\in\N,$ let $H( n^{1/d} \cP_n) = \sum_{i=1}^\ell a_i \beta^{r_i,s_i}_q(\cK( n^{1/d} \cP_n))$. We will verify that
\begin{align}\begin{split}\label{E:GaussianLimitPoisson1}
	&n^{-1} \ \V{H( n^{1/d} \cP_n)} \to \sigma^2 \\
	&\text{ and } n^{-1/2} \big( H( n^{1/d} \cP_n)  - \mathbb{E}[H( n^{1/d} \cP_n)] \big) \Rightarrow \cN(0,\sigma^2) \text{ as } n\to\infty,
\end{split}
\end{align}
where $\sigma^2 = \int_{[0,1]^d} \ol \sigma^2( \kappa(x)) \ \intd{x}$; here $\ol \sigma^2(\lambda)$ is the corresponding limiting variance if the Poisson point process $n^{1/d} \cP_n$ in the functional $H$ is replaced by a homogeneous Poisson process $\cP(\lambda)$ (with intensity $\lambda$) restricted to $B_n$. Now, we have
\begin{align*}
	\ol \sigma^2(\lambda) &= \lim_{n\to\infty} n^{-1} \V{H(\cP(\lambda)|_{B_n})} \\
	&=  \sum_{i,j=1}^\ell a_i a_j \ \lim_{n\to\infty}  n^{-1} {\rm Cov}[ \beta^{r_i,s_i}_q( \cK( \cP(\lambda)|_{B_n}) ),   \beta^{r_j,s_j}_q( \cK( \cP(\lambda)|_{B_n}) ) ] \\
	&= \lambda \sum_{i,j=1}^\ell a_i a_j  \lim_{n\to\infty}  (\lambda n)^{-1} {\rm Cov}[ \beta^{\lambda^{1/d} r_i,\lambda^{1/d} s_i}_q( \cK( \cP(1)|_{\lambda^{1/d}B_n}) ), \\
	&\hspace*{7cm} \beta^{\lambda^{1/d} r_j, \lambda^{1/d}s_j}_q( \cK( \cP(1)|_{\lambda^{1/d}B_n}) ) ] \\
	&= \lambda \sum_{i,j=1}^\ell a_i a_j  \gamma( \lambda^{1/d} (r_i, s_i), \lambda^{1/d} (r_j, s_j) ),
\end{align*}
where the last equality follows from from Proposition~\ref{P:CovarianceUnitPoisson}, and where $\gamma$ is the limiting covariance function of an underlying homogeneous Poisson process with unit intensity on $\R^d$. Hence,
\begin{align}\begin{split}\label{E:GaussianLimitPoisson2}
	\sigma^2 &= \sum_{i,j=1}^\ell a_i a_j \ \int_{[0,1]^d} \gamma( \kappa(x)^{1/d} (r_i,s_i),\kappa(x)^{1/d} (r_j,s_j) ) \ \kappa(x) \ \intd{x}.
\end{split}\end{align}
Consequently, once the statements in \eqref{E:GaussianLimitPoisson1} are verified, the proof is complete.

Using Theorem 3.3 of \cite{trinh2019central}, the statements in \eqref{E:GaussianLimitPoisson1} hold if, apart from the strong stabilization (Theorem~\ref{T:StrongStabAddOne}), the following two moment conditions are satisfied for cubes of the type $W = z + [0,a)^d \subseteq \R^d$ with $z\in\R^d,a\in\R_+$, and for each pair $(r,s)$ with $r\le s$:
\begin{itemize}
	\item [(1)] the Poisson bounded moments condition: for some $p>2$
	\[
		\sup_n \sup_{y\in\R^d} \sup_{y\in W: 	\text{ cube}} \E{ | \fD_0 \beta^{r,s}_q( \cK( n^{1/d} \cP_n |_W - y ) ) |^p } <\infty.
	\]
	\item [(2)] the locally bounded moments condition: for each cube $W$ as above, there is a $p>2$ such that
	\[
		\sup_n \sup_{y\in\R^d} \E{ | \beta^{r,s}_q( \cK( n^{1/d} \cP_n |_{y+W} ) ) |^{p} } <\infty.
	\]
\end{itemize}
Both conditions (1) and (2) are immediate consequences of the Geometric Lemma (Lemma~\ref{L:GeometricLemma}). Indeed, to see (1), observe that the Geometric Lemma gives
\begin{align*}
	| \fD_0 \beta^{r,s}_q( \cK( n^{1/d} \cP_n |_W - y ) ) | &\le \sum_{j=q}^{q+1} \cK_j(  (  n^{1/d} \cP_n |_W ) \cup \{y\} , s) \setminus \cK_j(  (  n^{1/d} \cP_n |_W ) ,s ) \\
	&\le 2 |  n^{1/d} \cP_n  \cap B(y,\mu(s) ) |^{q+1}.
\end{align*}
This last upper bound is stochastically dominated by $| \cP( \| \kappa \|_\infty )\cap B(0,\mu(s)) |^{q+1}$, which does not depend on $n,y$ or $W$, and the moment $\E{ | \cP( \|\kappa\|_\infty )\cap B(0,\mu(s)) |^{p(q+1)}}$ is finite for each $p,q\ge 0$. 

Regarding (2), we have again by Lemma~\ref{L:GeometricLemma} and the translation invariance that $\beta^{r,s}_q( \cK(n^{1/d} \cP_n |_{y+W} ))$ is stochastically dominated by $2 |\cP(\| \kappa \|_\infty)|_W|^{q+2}$, which does not depend on $y$ and $n$. Moreover, for each cube $W$ the moment $\E{ |\cP(\| \kappa\|_\infty)|_W|^{p(q+2)} }$ is finite for all $p,q\ge 0$. This completes the proof.
\end{proof}

\subsection{The asymptotic normality for the binomial process}\label{S:Binomial}	

For each $n\in\N$, let $(U_{m,n}\colon m\in\N)$ be a sequence of binomial processes such that $U_{m,n} = ( Y_{1,n},\ldots,Y_{m,n} )$ for i.i.d.\ sequences $(Y_{i,n}\colon i\in\N)$ with common marginal density $\kappa$.
\begin{proof}[Proof of Theorem~\ref{T:GaussianLimitBinomial}]
Let $\ell\in\N$, $a_1,\ldots,a_\ell\in\R$. We apply the abstract Theorem 3.9 in Trinh \cite{trinh2019central} to the functional $H( n^{1/d} \mX_n) = \sum_{i=1}^\ell a_i \beta^{r_i,s_i}_q(\cK( n^{1/d} \mX_n))$. 
Since we established the Poisson bounded moments condition and the locally bounded moments condition in the proof of Theorem~\ref{T:GaussianLimitPoisson} and since the $q$th persistent Betti number obtained from a finite set is polynomially bounded in the cardinality of this set, it is enough to verify for the add one cost function
\begin{align*}
				&\sup_{ n\in\N} \sup_{ m\in [n(1-\eta),n(1+\eta)]}  \E{ | \beta^{r,s}_q(\cK( n^{1/d}U_{m+1,n} )) - \beta^{r,s}_q(\cK( n^{1/d}U_{m,n} ))|^4} < \infty 
\end{align*}
for a some $\eta>0$. This can be verified with the geometric lemma (Lemma~\ref{L:GeometricLemma}) similar to Lemma~4.1 in Yogeshwaran et al.\ \cite{yogeshwaran2017random}, we omit the details.

Hence, the conditions of Theorem 3.9 in Trinh \cite{trinh2019central} are satisfied and
\begin{align*}
	&\frac{ \V{ H(n^{1/d} \mX_n ) } }{n} \to \tau^2,  \qquad  \frac{H(n^{1/d} \mX_n) - \E{H(n^{1/d} \mX_n)}}{\sqrt{n}} \Rightarrow N(0,\tau^2),
\end{align*}
where $\tau^2$ is given by the following relation: 
$
	\tau^2 = \sigma^2 - \Big(\int_{[0,1]^d} \E{ \ol\Delta( \kappa(x))} \kappa(x) \ \intd{x} \Big)^2
$
for $\sigma^2$ given in \eqref{E:GaussianLimitPoisson2} and 
\begin{align*}
	\E{ \ol\Delta( \lambda ) } &= \sum_{i=1}^\ell a_i \ \E{ \lim_{n\to\infty} \fD_0 \beta^{r_i,s_i}_q( \cK( \cP(\lambda)|_{B_n} )) } \\
	&=  \sum_{i=1}^\ell a_i \  \E{ \fD_0 \beta^{r_i,s_i}_q( \cK( \cP(\lambda)\cap B(0,S^{(r_i,s_i)}_q(\lambda) ))) } \\
	&= \sum_{i=1}^\ell a_i \  \E{ \fD_0 \beta^{\lambda^{1/d}r_i, \lambda^{1/d} s_i}_q( \cK( \cP(1)\cap B(0,S^{(\lambda^{1/d} r_i,\lambda^{1/d} s_i)}_q(1) ))) } \\
	&= \sum_{i=1}^\ell a_i \ \alpha(\lambda^{1/d} (r_i,s_i) ).
	\end{align*}
Thus, we have for a random variable $X$ distributed with density $\kappa$
\begin{align*}
	\tau^2 &= \sum_{i,j=1}^\ell a_i a_j \ \bigg\{ \E{ \gamma( \kappa(X)^{1/d} (r_i,s_i), \kappa(X)^{1/d} (r_j,s_j) ) } \\
	&\qquad\qquad\qquad\quad - \E{ \alpha(\kappa(X) ^{1/d} (r_i,s_i) ) } \E{ \alpha(\kappa(X) ^{1/d} (r_j,s_j) ) } \bigg\}.
\end{align*}
This completes the proof.
\end{proof}

\acks
The authors are very grateful to an AE and two referees for their careful reading and their suggestions which greatly improved the manuscript. Moreover, the authors thank Khanh Duy Trinh for his interest in this paper and for pointing out a mistake in a preliminary version.
Johannes Krebs was supported by the German Research Foundation (DFG), grant numbers KR-4977/1-1 and KR-4977/2-1. Wolfgang Polonik was partially supported by the National Science Foundation (NSF), grant number DMS-2015575.


\appendix 

\section{Appendix}\label{AppendixTightnessOfStabilization}
First we give the proof of Theorem~\ref{Thrm:StrongStabilizationAppl}
\begin{proof}[Proof of Theorem~\ref{Thrm:StrongStabilizationAppl}]
Recall that $\mu(r)$ is an upper bound on the diameter of a simplex with filtration time at most $r \ge 0$. Note that the statements for the radius of strong stabilization $\wt\rho_q^{\,r}$, together with the relation 
\[
	\rho^{(r,s)}(P,Q) \le S^{(r,s)}_q (P,Q) \qquad\text{for each }(r,s)\in \Delta, q\in\{0,\ldots,d-1\}
	\]
from Lemma~\ref{L:StrongAndWeakStabilization}, allows us to conclude the results for the radius of weak stabilization. So it remains to prove the statement for the strong stabilization property. We proceed for each $q$ separately; clearly this is no restriction.

In the following, if $Q_1$, $Q_2$, $r$ and $q$ are fixed, we just write $\wt\rho(\kappa)$ for $	\wt\rho_q^{\,r}(\cP(\kappa)\cup Q_1,Q_2) $ if the Poisson process $\cP(\kappa)$ has intensity $\kappa$.

In the remainder of the proof, we assume w.l.o.g. that the Poisson process $\cP(\kappa)$ on $\R^d$ is coupled to a homogeneous Poisson process $\cP$ on $\R^{d+1}$ with intensity 1 via a space-time coupling as follows
\begin{align}\label{DefCouplingPoisson}
		\cP(\kappa) = \{x|\, \exists\; t: 0 \le t \le \kappa(x), (x,t) \in \cP\}.
\end{align}
\textit{Proof of (1):} Let $r,\epsilon>0$ and $q\in\{0,\ldots, d-1\}$ be arbitrary but fixed. We first consider the case of adding exactly one additional point to the Poisson process, so $Q_1 = \emptyset$ and $Q_2=\{0\}$. We show that there is an $L>0$ such that $\p( 	\wt\rho(\lambda) > L)\le 2\epsilon$ for each $\lambda\in\R_+$. The generalization then works along the same lines.

We rely on the following ideas: If the intensity $\lambda$ is sufficiently small, there are no points of the Poisson process in the neighborhood of the additional point 0 with high probability; hence, adding 0 does not create a new cycle. 
Moreover, if the intensity $\lambda$ is sufficiently large, then an annulus around the origin is covered by the points of the Poisson process with high probability; hence the impact of adding 0 is only local.
Finally, we rely on the space-time coupling for the intermediate intensities.

First, there is a $\ul\kappa\in\R_+$ such that
\[
			\p( |\cP(\lambda) \cap B(0,\mu(r)) | = 0 ) \ge 1-\epsilon \text{ for all } \lambda\le \ul\kappa. 
\]
This means that with high probability and for all $q \ge 1$, we have, for all $\lambda$ below this threshold, that including $\{0\}$ does not create any additional $q$-simplex. Hence, if $\lambda\le \ul\kappa$, then $\p( 	\wt\rho(\lambda) > L)\le \epsilon$ for $L\ge \mu(r)$.

Also, there is an intensity $\ol\kappa$ such that with high probability all changes in the add one cost function which are caused by including the origin are limited to a deterministic neighborhood; we carry out the argument simultaneously for the \v Cech and Vietoris-Rips complex. Indeed, let $\delta \le \mu(r)(1-1/\sqrt{2})/\sqrt{d}$ sufficiently small such that $4\mu(r) /\delta \in\N$ and consider a partition of $A:=A_{2\mu(r)+\delta,2\mu(r)} =  [- 2\mu(r)-\delta, 2\mu(r)+\delta]^d \setminus (- 2\mu(r), 2\mu(r))^d$ with subcubes $(C_i)_{i\in I}$ of edge length $\delta$. We study $\cU = \bigcap_{i\in I} \{\#(\cP(\lambda) \cap C_i) \ge d\}$ (\textit{``each subcube contains at least $d$ Poisson points''}). Then, there is a $\ol\kappa\in\R_+$ depending on $r$ and $\delta$ such that
\[
		\p(\cU) = \prod_{i \in I} \p(\#(\cP(\lambda) \cap C_i) \ge d) \ge 1-\epsilon \text{ for all } \lambda\ge \ol\kappa.
\]
Define $L_0 \coloneqq (2\mu(r)+\delta)\sqrt{d}$. We show $\{  \wt\rho(\lambda) \le L_0 \} \supseteq \cU$  which in turn implies $\p( \wt\rho(\lambda) > L) \le \epsilon$ for $L \ge L_0$ if $\lambda \ge \ol{\kappa}$.

So, assume $\cU$. Let $\sigma^r_{q,1},\ldots,\sigma^r_{q,m_q}$ be the $q$-simplices which contain the origin. Clearly, these simplices are all contained in $[- \mu(r), \mu(r)]^d$. In particular, they do not intersect with $A$, which itself is homeomorphic to a $(d-1)$-cycle.

Plainly, we can triangulate $A$ with $(d-1)$-simplices with filtration time at most $ 2\delta \sqrt{d} < \mu(r)$: Indeed, consider two adjacent cubes $C_1,C_2$ each containing a $(d-1)$-simplex, $\sigma_1=\{x_0,\ldots,x_{d-1}\},\sigma_2=\{y_0,\ldots,y_{d-1}\}$, say. (Here adjacent means that the closures of the cubes have a nonempty intersection.) Then we can connect $\sigma_1$ and $\sigma_2$ with the $(d-1)$-simplices $\{x_0,\ldots,x_{i-1},y_{i},\ldots,y_{d-1}\}$ ($1\le i\le d-1$) and each of these simplices has a filtration time of at most $2\delta\sqrt{d}$.

Moreover, let $\sigma=\{x_0,\ldots,x_{k-1}\}$ be a $(k-1)$-simplex in $A_{2\mu(r) + \mu(r)/2, 2\mu(r)-\mu(r)/2}$ for a generic $k\in\N$. Then there is a $(k-1)$-simplex $\sigma^*=\{y_0,\ldots,y_{k-1}\}$ in $A$ such that the Euclidean distance between any two elements of $\sigma$ and $\sigma^*$ is at most $\sqrt{ (\mu(r)/2)^2 + (\mu(r)/2)^2 } + \sqrt{d} \delta$, which in turn is at most $\mu(r)$. Consequently, all $k$-simplices of the type $\{x_0,\ldots,x_i,y_i,\ldots,y_{k-1}\}$ have a filtration time at most $\mu(r)$.

This shows that conditional on $\cU$ any cycle in $A_{2\mu(r) + \mu(r)/2, 2\mu(r)-\mu(r)/2}$ is equivalent to a cycle in $A_{2\mu(r)+\delta,2\mu(r)}$ and, thus, ia a boundary as well. Hence, $\wt\rho(\lambda) \le L_0$ in this case.
In particular, we have $\p( \wt\rho(\lambda)> L)\le\epsilon$ for all $\lambda \ge \ol\kappa$ and all $L\ge L_0 = \sqrt{d}(2r+\delta)$.
 
It remains to check intensities $\lambda\in[\ul\kappa,\ol\kappa]$. Assume there is an $\epsilon>0$ such that
\[
			\limsup_{L\rightarrow \infty} \sup_{\lambda\in[\ul\kappa,\ol\kappa]} \p( \wt\rho(\lambda) > L ) > 2\epsilon.
\]
Then we can find sequences $(L_n)_n$ and $(\lambda_n)_n$ such that $L_n\rightarrow\infty$ and $\lambda_n\rightarrow\lambda^*\in[\ul\kappa,\ol\kappa]$ with the property that $\p( \wt\rho(\lambda_n)>L_n) > \epsilon$ for all $n\in\N$. However, there is an $L^*\in\R_+$ such that $\p(\wt\rho(\lambda^*)> L^*) < \epsilon/4$ as $\wt\rho$ is $a.s.$ finite by Theorem~\ref{Thrm:StrongStabilization}. Also due to the coupling of the Poisson processes (\ref{DefCouplingPoisson}), and because $\cP(\lambda)$ is a simple point process there are random $\ol{\delta}, \ul{\delta} > 0$ (depending on the choice of $L^*$) such that $\cP(\lambda)$ does not contain any points in $B(0,L^* + \mu(r))\times [\lambda^* - \ul\delta, \lambda^* + \ol\delta]$, viz.,
\[
	\ul\delta = \big(\lambda^* - \inf\{ \lambda<\lambda^*: \cP(\lambda)( B(0,L^* + \mu(r))\times [\lambda,\lambda^*] ) = 0 \}\big)/2
\]
and $\ol\delta = \big( \sup\{ \lambda>\lambda^*: \cP(\lambda)( B(0,L^* + \mu(r))\times [\lambda^*,\lambda] ) = 0 \} - \lambda^* \big)/2$. This means, for all $\lambda\in [\lambda^* -\ul\delta,\lambda^* +\ol\delta]$,
\[
			\cP(\ul\lambda)|_{B(0,L^* + \mu(r)) } \equiv \cP(\lambda)|_{B(0,L^* + \mu(r)) } \equiv \cP(\ol\lambda)|_{B(0,L^* + \mu(r)) }.
\]
Note that $\{ \lambda^*-\ul\lambda \ge \delta' \} \cap \{ \ol\lambda - \lambda^* \ge \delta' \} \supseteq \{ \cP( B(0,L^* + \mu(r))\times [\lambda^*-2\delta',\lambda^*+2\delta'] ) = 0 \}$ for $\delta'>0$. Consequently, there is an $\delta'\in\R_+$ such that the event $\{ \lambda^*-\ul\lambda>\delta'\} \cap \{ \ol\lambda - \lambda^* > \delta' \}$ has probability of at least $1-\epsilon/4$. Consequently, for all $n$ large enough such that $L_n\ge L^*$ and $|\lambda_n - \lambda^*|\le \delta'$

\begin{align*}
		\p( \wt\rho(\lambda_n)>L_n) &\le \p\big( \wt\rho(\lambda_n)>L_n, \wt\rho(\lambda^*) \le L^*, \lambda^*-\ul\lambda>\delta', \ol\lambda - \lambda^* > \delta' \big)  \\
		&\quad + \p\big(\wt\rho(\lambda^*) > L^* \big) + \p\big( \{\lambda^*-\ul\lambda\le \delta'\} \cup \{\ol\lambda - \lambda^*\le \delta'\} \big) \le \epsilon/2
\end{align*}
because $ \{ \wt\rho(\lambda_n)>L_n, \wt\rho(\lambda^*) \le L^*, \lambda^*-\ul\lambda>\delta', \ol\lambda - \lambda^* > \delta' \} = \emptyset$. This contradicts $\p( \wt\rho(\lambda_n)>L_n) > \epsilon$ for all $n\in\N$. Thus, the laws of $\{\wt\rho(\cP(\lambda),\{0\})):\lambda\in\R_+\}$ are tight. 

A similar reasoning shows that also the laws of $\{\wt\rho(\cP(\lambda)\cup Q_1,Q_2), \lambda \in \R_+, Q_1,Q_2\in \fQ_m\}$ are tight. At this point, it is essential that we have a uniform upper bound on the parameter $a^*(r)$ as follows
\begin{align}\label{E:ParaA}
	a^*(r) = L_{Q_2} + \mu(r) \le \sqrt{d} + \mu(\ol r)
\end{align}
for each $r \le \ol r$ and each $Q_2\subseteq Q(0)$. So, instead of computing the radius of strong stabilization by taking the infimum in \eqref{Def:StrongStabilizationRadius1} over all $\{ R: R \ge a^*(r)\}$, we can first take the infimum over the smaller set $\{R: R \ge \sqrt{d} + \mu(\ol r) \}$, which does not depend on $Q_2$ and $r$, in order to obtain a modified radius of strong stabilization, which is not smaller than the original one in \eqref{Def:StrongStabilizationRadius1}. Verifying the claim for this modified radius, implies then the claim for the original radius and for the rest of the proof of (1), we argue with this modified radius, which we denote as well by $\wt\rho_q^{\,r}$ abusing the notation slightly.

We sketch the remaining steps: Using the same techniques, we easily see that there are upper and lower bounds $\ul\kappa$ and $\ol\kappa$ such that intensities $\lambda\notin[\ul\kappa,\ol\kappa]$ only have a local effect in the same sense as in the special case for $\{0\}$, i.e., for each $\epsilon>0 $ there are $\ul\kappa,\ol\kappa$ and an $L>0$ such that
\begin{align*}
				\sup_{Q_1,Q_2\in \fQ_m} \quad \sup_{\lambda\notin [\ul\kappa,\ol\kappa] } \p( \wt\rho(\cP(\lambda)\cup Q_1,Q_2) > L ) \le \epsilon.
\end{align*}
Also for intensities $\lambda\in [\ul\kappa,\ol\kappa]$, we can repeat the argument as in the special case treated above. Indeed, assume the contrary, namely,
\begin{align*}
		\limsup_{L\rightarrow \infty} \quad \sup_{Q_1,Q_2\in\fQ_m} \quad \sup_{\lambda\in[\ul\kappa,\ol\kappa]} \p( \wt\rho(\cP(\lambda)\cup Q_1,Q_2) > L ) > 2\epsilon,
\end{align*}
for some $\epsilon>0$. Then there are sequences $(Q_{n,1})_n, (Q_{n,2})_n, (\lambda_n)_n, (L_n)_n$ with the properties $Q_{n,1}\rightarrow Q^*_1, Q_{n,2}\rightarrow Q^*_2$ (considered as vectors the entries of which are elements in $[-2^{-1},2^{-1}]^d$) for two admissible elements $Q^*_1,Q^*_2 \in \fQ_m$ as well as $\lambda_n\rightarrow \lambda^*\in [\ul\kappa,\ol\kappa]$ and $L_n\rightarrow\infty$. Also these sequences satisfy
\[
			\p( \wt\rho(\cP(\lambda_n)\cup Q_{n,1},Q_{n,2}) > L_n ) > \epsilon\qquad \text{for all $n$}.
\]
Now, we can argue as before in the special case to obtain a contradiction. Hence, we arrive at the following result: For all $\epsilon>0$, for all $m\in\N$, for all $r\in \R_+$, there is an $L>0$ such that
\[
		\max_{q\in\{0,\ldots,d-1\}} \quad \sup_{\lambda\in\R_+} \quad \sup_{Q_1,Q_2\in\fQ_m} \p( \wt\rho_q^{\,r}(\cP(\lambda)\cup Q_1,Q_2) > L ) \le \epsilon.
\]

So far, we have been considering a fixed $r\le \ol r$. We now prove the general statement given in (1). To this end, we rely once more on \eqref{E:ParaA} and the induced modification of $\wt\rho_q^{\,r}$. Then, for $r\le \ol r$ and $\alpha\in\R_+$ arbitrary but fixed,
\begin{align}\begin{split}\label{E:ScalingEquality}
			\p( \wt\rho_q^{\,\alpha r}( \cP(\lambda)\cup Q_1, Q_2) > L ) &=  \p( \wt\rho_q^{\,r}( \alpha^{-1} \cP( \lambda)\cup \alpha^{-1}Q_1, \alpha^{-1}Q_2) > \alpha^{-1} L ) \\
			&= \p( \wt\rho_q^{\,r}( \cP(\alpha^d \lambda)\cup \alpha^{-1}Q_1, \alpha^{-1}Q_2) > \alpha^{-1} L ),
\end{split}\end{align}
using the scale invariance for the first equation and $\cL( \alpha^{-1} \cP(\lambda)) = \cL( \cP(\alpha^d \lambda))$ for the second equation.
Consequently, for all $\ol\alpha,\ul\alpha,\epsilon\in\R_+$ arbitrary but fixed, $\ul\alpha\le \ol\alpha$, there is an $L\in\R_+$ such that
\begin{align}\label{E:TightRhoUnif1}
			\sup_{\alpha\in [\ul\alpha,\ol\alpha]} \quad \sup_{\lambda\in\R_+} \quad \sup_{Q_1,Q_2\in\fQ_m} \p( \wt\rho_q^{\,\alpha r}( \cP(\lambda)\cup Q_1, Q_2) > L ) \le \epsilon.
\end{align}
This completes the considerations of part (1).

\textit{Proof of (2):} We use a suitable space-time coupling of Poisson processes. Let $r\in\R_+$ and $q\in\{0,\ldots,d-1\}$ be arbitrary but fixed. Write $\wt\rho$ for $\wt\rho_q^{\,r}$. Note that the law of $n^{1/d}\cP(n \nu)$ equals the law of $\cP( \nu(\cdot/n^{1/d}))$. 

First, we show that for all $\epsilon>0$, there is a $b>0$ and an $L>0$ such that 
\[
	\sup_{n\in\N} \sup_{z \in  B''_{n,L}} \p( \wt\rho( \cP( \nu(\cdot/n^{1/d})\cup Q_1, Q_2 ) > L) \le \epsilon
\]
 for all densities $\nu$ satisfying $\|\nu - \kappa\|_\infty \le b,$ and for all $Q_1,Q_2\in z+\fQ_m$, where $m\in\N$ is arbitrary but fixed.
 
From part (1) of the theorem, for each $\epsilon>0$ there is an $L>0$ such that, for the homogeneous Poisson process $\cP( \nu(z/n^{1/d})),$ it is true that
\[
			\p( \wt\rho( \cP( \nu(z/n^{1/d}))\cup Q_1, Q_2 ) > L ) \le \epsilon
\]
uniformly in $z\in [0,n^{1/d}]^d$, $Q_1,Q_2\in z+\fQ_m$ and $n\in\N$ and for all densities $\nu$. Moreover, with $K > 0,$ define the set
\begin{align*}
			A_{\nu,n}(K,z) = \Big\{x\in B(z,K) \ \Big| & \ \exists t\in \Big[\nu\Big( \frac{x}{n^{1/d}}\Big)\wedge \nu\Big( \frac{z}{n^{1/d}}\Big), \nu\Big( \frac{x}{n^{1/d}}\Big)\vee \nu\Big( \frac{z}{n^{1/d}}\Big) \Big] \\
			&\qquad \text{ and } (x,t)\in\cP	 \Big\}.
\end{align*}
By assumption, the function $\kappa$ is uniformly continuous with a certain modulus of continuity $\omega(\cdot)$. Hence, $|\kappa(x_1) - \kappa(x_2)|\le \omega(\delta)$, whenever $\|x_1-x_2\| \le \delta$ and $\omega(\delta)\to 0$ as $\delta\to 0$. Let $\nu$ be a density satisfying $\| \nu - \kappa\|_\infty \le b$. Then
\begin{align}
			&\p( A_{\nu,n}(K,z) \neq \emptyset ) \nonumber \\
			&\le \p\Big( \exists x\in B(z,K) \ \Big| \  \exists t \in \Big[\kappa\Big( \frac{x}{n^{1/d}}\Big)\wedge \kappa\Big( \frac{z}{n^{1/d}}\Big) - b, \kappa\Big( \frac{x}{n^{1/d}}\Big)\vee \kappa\Big( \frac{z}{n^{1/d}}\Big) + b \Big] \nonumber\\
			&\qquad \qquad \qquad \qquad\qquad \text{ and } (x,t)\in\cP \Big) \nonumber \\
			&\le \p\Big( \exists x\in B(z,K):\;\exists t \in \Big[0, \omega(K/n^{1/d}) + 2b\Big] \text{ and } (x,t)\in\cP \Big), \label{E:StrongStabilizationRadius1ab}
\end{align}
where the last inequality uses the stationarity of $\cP$. Clearly, given a value for $K$, there are $b>0$ and $n_0\in\N$ such that \eqref{E:StrongStabilizationRadius1ab} is small uniformly in $z\in B''_{n,K}$, $\nu$ in a $b$-neighborhood of $\kappa,$ and $n\ge n_0$.

We come to the conclusion. Let $\epsilon>0$ be arbitrary but fixed. We apply the result from part (1) and choose $L^*\in\R_+$ such that	$\p( \wt\rho( \cP(\lambda)\cup Q_1, Q_2) > L^* ) \le \epsilon/2$ is satisfied uniformly in $\lambda\in\R_+$, $Q_1,Q_2\in z+\fQ_m$ and $z\in\R^d$. 

Next, let $K^* = L^* + 2\mu(r)$. Choose $b>0$ and $n_0\in\N$ such that $\p( A_{\nu,n}(K^*,z) \neq \emptyset ) \le \epsilon/2$ for all $n\ge n_0$, for all $z\in  B''_{n,K^*}$ and for all $\nu$ such that $\|\nu - \kappa \|_\infty \le b$. Since by assumption $z\in  B''_{n,K^*}$, this implies 
\begin{align}\begin{split}\label{E:StrongStabilizationRadius1b}
			&\Big\{ \wt\rho( \cP( \nu( \cdot / n^{1/d} ) )\cup Q_1, Q_2) > K^*, \\
			&\qquad \wt\rho( \cP(\nu(z/n^{1/d}))\cup Q_1, Q_2) \le L^*, A_{\nu,n}(K^*,z) = \emptyset \Big\} = \emptyset.
\end{split}\end{align}
Consequently,
\begin{align}\begin{split}\label{E:StrongStabilizationRadius2}
			&\p( \wt\rho( \cP( \nu( \cdot / n^{1/d} ) ) \cup Q_1, Q_2) > K^* )\\
			&\le \p\big( \wt\rho( \cP( \nu( \cdot / n^{1/d} ) ) \cup Q_1, Q_2) > K^*,  \\
			&\qquad \wt\rho( \cP(\nu(z/n^{1/d}))\cup Q_1, Q_2) \le L^*, A_{\nu,n}(K^*,z) = \emptyset \big) + \epsilon = \epsilon,
\end{split}\end{align}
for all $z\in  B''_{n,K^*}$, for $n\ge n_0$ and for all $\nu$ such that $\sup |\nu - \kappa| \le b$.

The generalization to an entire parameter range for the filtration parameter follows now from the result in \eqref{E:TightRhoUnif1} (note that $\ol\lambda = \sup \kappa$ is an admissible choice in this equation) and by using a similar ansatz as in the derivation of \eqref{E:StrongStabilizationRadius2}: So, for each $\epsilon>0$, $\ol\alpha\ge \ul\alpha>0$, there are $L\in \R_+$, $b>0$ and $n_0\in\N$ such that for each $0\le q \le d-1$
\begin{align}\label{E:StrongStabilizationRadius3}
		\sup_{\ul\alpha\le \alpha \le \ol\alpha} \quad \sup_{n \ge n_0} \quad \sup_{Q_1,Q_2\in\fQ_{m}}	\p( \wt\rho_q^{\,\alpha r} ( \cP( \nu( \cdot / n^{1/d} ) ) \cup Q_1, Q_2) > L )  \le \epsilon,
\end{align}
for all densities $\nu$ in a $b$-neighborhood of $\kappa$ w.r.t.\ the sup-norm.
This yields then the first result given in part (2). The second result is now immediate: There is an $m\in\N$ such that with high probability, the number of Poisson points inside $Q(z)$ is at most $m$. So that we can then apply the previous result.

\textit{The proof of (3):} This time we couple the binomial process to a suitable Poisson process. Again, let $0\le q\le d-1$ and $r$ be arbitrary but fixed. We first study the radius $\wt\rho_q^{\,r}(n^{1/d}\mX_m,n^{1/d}X')$. First, note that for all $\epsilon>0$ and for all $L>0$ there is an $n_0\in\N$ such that $\p(n^{1/d} X'\notin B''_{n,L}) \le \ol\kappa n^{(d-1)/d} L n^{-1} \le \epsilon$ for all $n\ge n_0$. So, due to independence, for each $s>0,$
\[
			\p( \wt\rho_q^{\,s}( n^{1/d} \mX_m, n^{1/d} X') >L ) \le \sup_{z\in B''_{n,L}} \p\big( \wt\rho_q^{\,s}( n^{1/d} \mX_m, \{z\} ) >L \big) + \epsilon, \quad \forall n\ge n_0.
\]

For each $n\in\N$, let $V_{1,n},V_{2,n},\ldots$ be i.i.d.\ with density $\kappa(\cdot/n^{1/d})$. Let $\cP( \kappa(\cdot/n^{1/d})) = \{Z_{1,n},\ldots, Z_{N_n,n}\}$ be the Poisson process from \eqref{DefCouplingPoisson} for the intensity function $\kappa(\cdot/n^{1/d})$ and $N_n\sim \poi(n)$. Then $n^{1/d}\mX_m$ has the same distribution as the process
\[
			U_{m,n} = \Big[\cP( \kappa(\cdot/n^{1/d})) \setminus \{Z_{m+1,n},\ldots, Z_{N_n,n}\} \Big] 
\cup \{ V_{1,n},\ldots,V_{m-N_n,n} \},
\]
where, by convention, $\{Z_{m+1,n},\ldots, Z_{N_n,n}\}$ is empty if $N_n\le m,$ and $\{ V_{1,n},\ldots,V_{m-N_n,n} \}$ is empty if $N_n\ge m$.

By \eqref{E:StrongStabilizationRadius3}, for each $\ol\alpha\ge\ul\alpha>0$ and for each $\epsilon>0$, there is an $L>0$ such that
\[
			\sup_{\ul\alpha\le \alpha\le \ol\alpha} \sup_{n\in\N} \sup_{z\in B''_{n,L} } \p( \wt\rho_q^{\,\alpha r} ( \cP( \kappa(\cdot/n^{1/d})) , \{z\} ) > L) \le \epsilon.
\]
Note that here we use the fact that we only study one density function, namely, $\kappa$, so for values of $n$, $n\le n_0$, we choose $L>0$ individually and take the maximum in the end.

Also, for all $\epsilon>0$, for all $z\in B_n,$ and for all $K>0,$ there is an $n_0\in\N$ such that for all $n\ge n_0$, $\p( A'_n(K,z) ) \ge 1 - \epsilon$, where
\begin{align*}
				A'_n(K,z) = \{ [\{Z_{m+1,n},\ldots, Z_{N_n,n}\} \cup \{ V_{1,n},\ldots,V_{m-N_n,n} \} ] \cap B(z,K) = \emptyset \}.
\end{align*}
Indeed, this result follows from standard calculations as $\E{|m-N_n|} \le |m-n| + \E{|N_n-n|^2}^{1/2} \le h(n) + n^{1/2}$ and as the probability that a single point falls in $B(z,K)$ is bounded above by a constant times $n^{-1}$. 

In the last step, we combine these observations as follows. First, let $\epsilon>0$ be arbitrary but fixed. Then there is an $L^*>0$ such that 
\[
				\p(n^{1/d} X'\notin B''_{n,L^*}) \le \frac{\epsilon}{3}
				\]
and
\[
	\sup_{\ul\alpha\le \alpha\le \ol\alpha} \quad \sup_{n\in\N} \quad \sup_{z\in  B''_{n,L^*} } \p( \wt\rho_q^{\,\alpha r}( \cP( \kappa(\cdot/n^{1/d})) , \{z\} ) > L^*) \le \frac{\epsilon}{3}.
\]
Moreover, with $K^* = L^* + 2\mu(r)$, there is an $n_0\in\N$ such that for $z\in B_n$ and $n\ge n_0$, $\p( A'_n(K^*,z)^c ) \le \epsilon/3$.
Consequently, similar to \eqref{E:StrongStabilizationRadius1b}, if $n\ge n_0$ and if $L\ge L^*$, then
\begin{align*}
		&\p(\wt\rho_q^{\,\alpha r} (n^{1/d} \mX_m, n^{1/d} X') >L ) \\
		&\le \sup_{z\in B''_{n,L^*}}	\p( \wt\rho_q^{\,\alpha r}( U_{m,n}, \{z\} ) > L) + \frac{\epsilon}{3} \\
		&\le \sup_{z\in B''_{n,L^*}}	\p( \wt\rho_q^{\,\alpha r}( U_{m,n}, \{z\})>L, A'_n(K^*,z), \wt\rho_q^{\,\alpha r}( \cP( \kappa(\cdot/n^{1/d})) , \{z\} ) \le L^* ) + \epsilon = \epsilon,
\end{align*}
uniformly in $m\in J_n$ and $\ul\alpha\le \alpha \le \ol\alpha$. This shows (3) and completes the proof.
\end{proof}

Throughout the remainder of the appendix we consider a more general filtration as in \cite{hiraoka2018limit}. Examples for these filtrations are the {\v C}ech or the Vietoris-Rips filtration. The following principle will be important.
\begin{lemma}\label{L:IntersectionProperty}
Let $\cK$ and $\cK'$ be two simplicial complexes. Then $C_q(\cK)\cap C_q(\cK') = C_q(\cK\cap \cK')$. Moreover, $Z_q(\cK)\cap Z_q(\cK') = Z_q(\cK\cap\cK')$ and $B_q(\cK)\cap B_q(\cK') \supseteq B_q(\cK\cap\cK')$.
\end{lemma}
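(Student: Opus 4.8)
The plan is to realize all the chain groups $C_q(\cK)$, $C_q(\cK')$ and $C_q(\cK\cap\cK')$ as subspaces of one ambient $\mathbb{F}_2$-vector space $C_q$ spanned by all potential $q$-simplices on the common vertex set, and to exploit that over $\mathbb{F}_2$ a $q$-chain is nothing but the indicator of a finite set of $q$-simplices. Under this identification, $c\in C_q(\cK)$ precisely when $\supp c$ — the collection of simplices appearing in $c$ — consists of $q$-simplices of $\cK$, and the boundary map $\partial_q$ is a single ambient operator, sending a $q$-simplex to the formal sum of its codimension-one faces, whose restriction to $C_q(\cK)$ lands in $C_{q-1}(\cK)$ because $\cK$ is closed under taking faces. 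The decisive advantage of this viewpoint is that the boundary operators on $\cK$, $\cK'$ and $\cK\cap\cK'$ literally coincide as maps, so every assertion reduces to bookkeeping of supports.

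First I would prove the chain-group identity. Since $\cK\cap\cK'$ is by definition the simplicial complex whose simplices lie in both $\cK$ and $\cK'$, a chain $c$ lies in $C_q(\cK)\cap C_q(\cK')$ iff every simplex of $\supp c$ belongs both to $\cK$ and to $\cK'$, that is, iff $\supp c\subseteq \cK\cap\cK'$, which is exactly $c\in C_q(\cK\cap\cK')$. This yields $C_q(\cK)\cap C_q(\cK')=C_q(\cK\cap\cK')$ at every level $q$.

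Next I would deduce the cycle-group identity from this. Because $Z_q(\cK)=\{c\in C_q(\cK):\partial_q c=0\}$ with the same ambient $\partial_q$, a chain is a common cycle iff it lies in $C_q(\cK)\cap C_q(\cK')$ and is annihilated by $\partial_q$; invoking the chain-group identity just established turns this into $\{c\in C_q(\cK\cap\cK'):\partial_q c=0\}=Z_q(\cK\cap\cK')$, giving $Z_q(\cK)\cap Z_q(\cK')=Z_q(\cK\cap\cK')$. For the boundary groups, if $c\in B_q(\cK\cap\cK')$ then $c=\partial_{q+1}d$ for some $d\in C_{q+1}(\cK\cap\cK')=C_{q+1}(\cK)\cap C_{q+1}(\cK')$ by the chain-group identity at level $q+1$; since $d$ lies simultaneously in $C_{q+1}(\cK)$ and $C_{q+1}(\cK')$, its boundary $c$ lies in $B_q(\cK)$ and in $B_q(\cK')$, which is exactly the claimed inclusion $B_q(\cK\cap\cK')\subseteq B_q(\cK)\cap B_q(\cK')$.

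I expect the only genuine subtlety to be explaining why the boundary-group statement is merely an inclusion and not an equality: a class $c\in B_q(\cK)\cap B_q(\cK')$ is a boundary $\partial_{q+1}d_1$ with $d_1\in C_{q+1}(\cK)$ and also a boundary $\partial_{q+1}d_2$ with $d_2\in C_{q+1}(\cK')$, but there need be no common primitive $d\in C_{q+1}(\cK\cap\cK')$ realizing $c$, because $d_1$ and $d_2$ may differ (their difference $d_1-d_2$ is a cycle that need not vanish, and need not lie in $C_{q+1}(\cK\cap\cK')$). I would flag this as the precise reason the reverse inclusion fails; beyond it, the argument is pure linear algebra over $\mathbb{F}_2$ with no geometric input other than the face-closure of simplicial complexes.
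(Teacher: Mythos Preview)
Your proof is correct and follows essentially the same approach as the paper's: both arguments exploit that $C_q$ is a free $\mathbb{F}_2$-module on the $q$-simplices, so membership in $C_q(\cK)$ is determined entirely by the support of a chain, and then the cycle and boundary statements follow formally. Your presentation via an explicit ambient chain space and a single boundary operator is slightly cleaner than the paper's coefficient-comparison argument, and your remark on why the boundary inclusion can be strict matches the paper's own observation made just after the lemma.
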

\begin{proof}
First, we consider the claim concerning the spaces $C_q$. The inclusion ``$\supseteq$'' is clear and we only prove ``$\subseteq $''. This inclusion can be deduced from the fact that $C_q$ is a free module over $\mathbb{F}_2$ generated by the corresponding $q$-simplices in the filtration. We can write $c\in C_q(\cK)\cap C_q(\cK')$ as $\sum_{i} a_i \sigma_i$, where $\sigma_i$ are $q$-simplices in $\cK$, $a_i\in\mathbb{F}_2$, and also as $\sum_{j} b_j \wt\sigma_j,$ where $\wt\sigma_j$ are $q$-simplices in $\cK'$, $b_j\in\mathbb{F}_2$. Hence, $\sum_{i} a_i \sigma_i - \sum_{j} b_j \wt\sigma_j =0$. If $\sigma_i\in \cK\setminus\cK'$, the coefficient $a_i$ is zero, as this basis element cannot occur in the filtration $\cK'$. The same holds in the other direction, if $\wt\sigma_j\in\cK'\setminus\cK$, $b_j$ is zero. 

The amendment $Z_q(\cK)\cap Z_q(\cK') = Z_q(\cK\cap\cK')$ follows immediately. Again the inclusion ``$\supseteq$'' is clear and we only prove ``$\subseteq$''. If $c\in Z_q(\cK)\cap Z_q(\cK')$, then by the above $c\in C_q(\cK\cap\cK')$ and by assumption $\partial c = 0$. Thus, $c\in Z_q(\cK\cap\cK') $ as desired. The inclusion concerning the boundary groups is immediate.
\end{proof}

We remark that $B_q(\cK)\cap B_q(\cK') \not\subseteq B_q(\cK\cap\cK')$ is possible. For instance, consider a situation where $\beta_0(\cK)=\beta_0(\cK')=1$ and where $\cK \cap \cK' = \{a,b\}$ for two 0-dimensional simplices $a,b$ such that $\cK\cap \cK'$ is a strict subset of $\cK$ and $\cK'$. (E.g., we can take two ``arc-like'' connected components represented by $\cK$ and $\cK'$ which only intersect in their endpoints.) Then we have $ B_0(\cK\cap\cK') = \{0\}$, but $B_0(\cK)\cap B_0(\cK')$ contains $a+b$ as a nontrivial element.

In the following assume that $P$ is a simple point cloud on $\R^d$ without accumulation points and $Q$ is a finite subset of $\R^d$ such that $P\cap Q = \emptyset$ and $Q\subseteq Q(z,L)$ for some $z\in\R^d$ and $L\in\R_+$.  Define
\begin{align*}
			\cK_{s,a} &= \cK_s( P\cap B(z,a)), \qquad \cK'_{s,a} = \cK_s((P\cup Q)\cap B(z,a)).
\end{align*}
Set $a^*= a^*(s) = \mu(s) + L$, where $\mu(s)$ is the upper bound on the diameter of a simplex in the filtration at time $s$, which is guaranteed by the assumptions of \cite{hiraoka2018limit} on the filtration.
Choose $a_1,a_2 \in \R$ with $a^*\le a_1 \le a_2$ such that  $C_0(\cK_{s,a_2}\setminus\cK_{s,a_1})$ contains exactly one additional basis element (a point) from $P$ and write
\[
				C_{q+1}( \cK'_{s,a_2} \setminus \cK'_{s,a_1}) = \langle \sigma_1,\ldots,\sigma_n \rangle.
\]
We can assume w.l.o.g.\ that the simplices are already in the right order, i.e.,
\begin{align}\label{E:ReprB_qDash}
				B_q (\cK'_{s,a_2}) = B_q( \cK'_{s,a_1}) \oplus \langle \partial\sigma_1,\ldots,\partial\sigma_i \rangle,
\end{align}
such that $\partial\sigma_j \neq 0$ mod $ B_q( \cK'_{s,a_1}) \oplus \langle \partial\sigma_1,\ldots,\partial\sigma_{j-1} \rangle$ for $j=1,\ldots,i$ and $\partial\sigma_j = 0$ mod $ B_q( \cK'_{s,a_1}) \oplus \langle \partial\sigma_1,\ldots,\partial\sigma_{i} \rangle$ for $j=i+1,\ldots, n$. As $a_1$ is sufficiently large, we have that each of the simplices $\sigma_j$ is also contained in $\cK_{s,a_2}$. Hence, as $B_q (\cK_{s,a})$ is a subspace of $B_q (\cK'_{s,a})$, we have that
\begin{align}\begin{split}\label{E:ReprB_q}
		&B_q (\cK_{s,a_2})= B_q( \cK_{s,a_1}) \oplus \langle \partial\sigma_1,\ldots,\partial\sigma_i \rangle \oplus \langle \partial\sigma_{j}: \text{ for } j\in J \rangle,
\end{split}\end{align}
where $J \subseteq \{i+1,\ldots,n\}$ can be empty. In particular,
\[
	\dim B_q (\cK'_{s,a_2}) - \dim B_q (\cK_{s,a_2}) = \dim B_q (\cK'_{s,a_1}) - \dim B_q (\cK_{s,a_1}) + (i - i - \# J).
\]
I.e., the map $a\mapsto \dim B_q( \cK'_{s,a}) / B_q (\cK_{s,a})$ is non-increasing if $a\ge a^*$.

Finiteness of the radius of weak stabilization $\rho^{(r,s)}(P,Q)$ follows directly from Lemma~\ref{L:StrongAndWeakStabilization} and Theorem~\ref{Thrm:StrongStabilization}. This argument relies on the finiteness of the radius of strong stabilization (Theorem~\ref{Thrm:StrongStabilization}), which, however, is not necessary for the finiteness of $\rho^{(r,s)}(P,Q)$. The following lemma gives a direct proof for the finiteness of $\rho^{(r,s)}(P,Q)$ using similar ideas as in the proof of Lemma 5.3 of Hiraoka et al.\ \cite{hiraoka2018limit}.

\begin{lemma}\label{L:MeaningfulRho}
The radius $\rho^{(r,s)}(P,Q)$ from \eqref{Def:WeakStabilizationRadius} is well-defined.
\end{lemma}
\begin{proof}
It is sufficient to consider $\rho^{(r,s)}_q(P,Q)$ for each $0\le q\le d-1$. One can use the geometric lemma to show that the nonnegative, integer-valued mappings
\begin{align}\label{E:MeaningfulRho1}
				a\mapsto \dim \frac{ Z_q (\cK'_{r,a})}{ Z_q (\cK_{r,a})} \text{ and } a\mapsto \dim \frac{ Z_q (\cK'_{r,a}) \cap B_q (\cK'_{s,a})}{ Z_q (\cK_{r,a})\cap B_q (\cK_{s,a})} 
\end{align}
are bounded above. Moreover, the map	$ Z_q (\cK'_{r,a_1}) / Z_q (\cK_{r,a_1})  \hookrightarrow  Z_q (\cK'_{r,a_2}) / Z_q (\cK_{r,a_2})$ is injective for all $0\le a_1\le a_2$, this follows from Lemma~\ref{L:IntersectionProperty}. Thus, there is an $a^*_1\in\R_+$ such that the first mapping in \eqref{E:MeaningfulRho1}, which is integer-valued, is constant for all $a\ge a^*_1$. 

Next, we show that the second mapping in \eqref{E:MeaningfulRho1} becomes also constant as $a\rightarrow \infty$. To this end, we first show that $a\mapsto \dim B_q (\cK'_{s,a}) /  B_q (\cK_{s,a})$ is constant for all $a\ge a^*_2$ for a certain $a^*_2\in\R_+$. This follows however from the non-increasing property (see paragraph right after \eqref{E:ReprB_q}) and the boundedness from below of this mapping. Now, we can return to the second mapping in \eqref{E:MeaningfulRho1} and one argues as in the proof of Lemma~\ref{L:StrongAndWeakStabilization} to show that the difference $\dim Z_q( \cK'_{r,a}) \cap B_q(\cK'_{s,a}) - \dim Z_q( \cK_{r,a}) \cap B_q(\cK_{s,a})$ is constant for all $a \ge a^*_1 \vee a^*_2$.

\end{proof}

\end{document}